\pgfplotsset{compat=1.18}
\numberwithin{equation}{section}
\newcommand{\la}{\lambda}
\newcommand{\al}{\alpha}
\newcommand{\be}{\begin{equation}}
\newcommand{\bea}{\begin{eqnarray}}
\newcommand{\beas}{\begin{eqnarray*}}
\newcommand{\ee}{\end{equation}}
\newcommand{\eea}{\end{eqnarray}}
\newcommand{\eeas}{\end{eqnarray*}}
\newcommand{\tr}{{\rm Tr}}
\newcommand{\va}[1]{\left| #1 \right|} 
\newcommand{\laminates}{\mathcal{L}(\mathbb{M}^{3\times 3})}
\newcommand{\measures}{\mathcal{M}(\mathbb{M}^{3\times 3})}
\newcommand{\matrici}{\mathbb{R}^{3 \times 3}}
\newcommand{\R}{\mathbb{R}}
\newcommand{\N}{\mathbb{N}}
\newcommand{\Sd}{\mathbb{S}^2}
\newcommand{\holder}[1]{{\left[ #1 \right]}_{C^{\alpha} (\bar{\Omega}) }}
\newcommand{\dist}{\rm dist}
\newcommand{\LL}{{\mathcal L}(S)}
\newcommand{\T}{\mathcal T}
\DeclareMathOperator{\spt}{spt}
\DeclareMathOperator{\rank}{rank}
\DeclareMathOperator{\Tr}{Tr}
\newtheorem{theorem}{Theorem}[section]
\newtheorem{Lemma}[theorem]{Lemma}
\newtheorem{proposition}[theorem]{Proposition}
\newtheorem{corollary}[theorem]{Corollary}
\newtheorem{definition}[theorem]{Definition}
\newtheorem{remark}[theorem]{Remark}
\newlength{\temparglen}
\newcommand{\pagenlarge}[1]{
\setlength{\temparglen}{#1\topmargin}
\addtolength{\textheight}{2\temparglen}
\addtolength{\topmargin}{-\temparglen}
\setlength{\temparglen}{#1\oddsidemargin}
\addtolength{\textwidth }{2\temparglen}
\addtolength{\oddsidemargin }{-\temparglen}
\addtolength{\evensidemargin }{-\temparglen}
}
\title[ Differential inclusions and polycrystals] {Differential inclusions and polycrystals}
\author[N. Albin]
{N. Albin}
\address[Nathan Albin]{Department of Mathematics, Kansas State University, 138 Cardwell Hall, 1228 N.~17th Street, Manhattan, KS 66506, USA}
\email{albin@k-state.edu}
\author[V. Nesi]
{V. Nesi}
\address[Vincenzo Nesi]{Dipartimento di Matematica, Sapienza, Universit\`a di Roma, P.le A. Moro, 00100, Rome, Italy}
\email{vincenzo.nesi@uniroma1.it}
\author[M. Palombaro]
{M. Palombaro}
\address[Mariapia Palombaro]{DISIM, Universit\`a dell'Aquila, Via Vetoio, 67100 L'Aquila, Italy}
\email{mariapia.palombaro@univaq.it}
\begin{document}
\begin{abstract}
We study the differential inclusion $Du\in K$, where $K$ is an unbounded and rotationally invariant subset of the real symmetric $3\times 3$ matrices.
We exhibit a subset of all possible average fields. 
The corresponding microgeometries are laminates of infinite rank. 
The problem originated in the search for the effective conductivity of polycrystalline composites. In the latter context, our result is an improvement of the previously known bounds established by Nesi $\&$ Milton \cite{NM}, hence proving the optimality of a new full-measure class of microgeometries.

\par\medskip
{\noindent\textbf{Keywords:}
Effective conductivity, differential inclusions, laminates, rigidity.
}
\par{\noindent\textbf{MSC2020:} 
35B27, 49J45}
\end{abstract}

\maketitle

\thispagestyle{empty}

\vspace{0.5cm}
\begin{flushright}
\itshape 
\tiny
Our paper is dedicated to the memory of Marco Avellaneda,\\
who made a profound contribution to the polycrystal problem, \\
who donated his brilliant insight to many branches of mathematics, \\
who defined himself  ``immigrant'' before mathematician, \\
in times when you needed the courage to do so.\\
A great man who stood firmly against discrimination. \\
In math and elsewhere.
\end{flushright}
\vspace{0.5cm}

\section{Introduction}\label{sec:introduction}
\noindent 
We provide solutions to a differential inclusion arising in the context of bounding the effective conductivity of polycrystalline composites  \cite{ACLM}. 
Consider the $3\times 3$ diagonal matrix 
\begin{equation}\label{S}
S=\left(
\begin{array}{ccc}
s_1&0&0\\
0&s_2&0\\
0&0&s_3
\end{array}
\right)
\end{equation}
subject to the constraints
\begin{equation}\label{cS}
0<s_1 < s_2< s_3,\quad s_1+s_2+s_3=1.
\end{equation}

In this paper, we shall visualize the unit-trace matrices as points in the plane using the projection shown in Figure~\ref{fig:2d-projection}.

\begin{figure}
    \centering
    \begin{subfigure}{0.33\textwidth}
    \includegraphics[width=\textwidth]{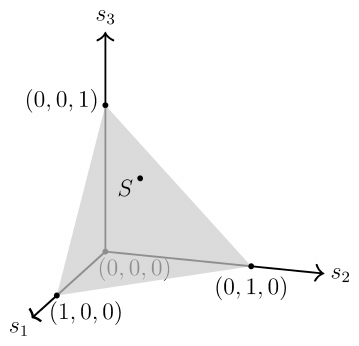}
    \caption{}\label{fig:2d-projection-a}
    \end{subfigure}%
    \begin{subfigure}{0.33\textwidth}
    \includegraphics[width=\textwidth]{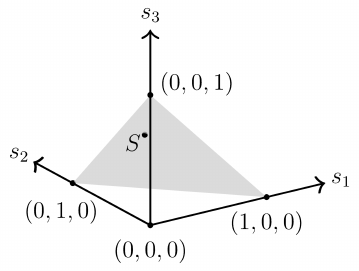}
    \caption{}\label{fig:2d-projection-b}
    \end{subfigure}
    \begin{subfigure}{0.33\textwidth}
    \includegraphics[width=\textwidth]{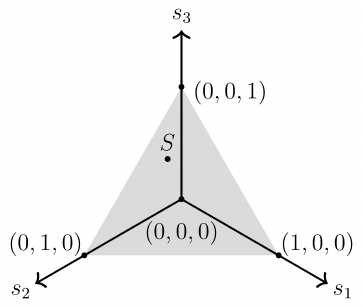}
    \caption{}\label{fig:2d-projection-c}
    \end{subfigure}%
    \caption{The projection of the unit-trace plane onto $\mathbb{R}^2$. Every $3\times 3$ real symmetric matrix, $S$, is visualized as a point in $\mathbb{R}^3$ corresponding to its eigenvalues, $(s_1,s_2,s_3)$ (up to permutation). The shaded triangle is the portion of the unit-trace plane formed by the convex hull of the canonical basis vectors. Figure~\ref{fig:2d-projection-a} shows the plane and a matrix, $S$, on the plane viewed from the $(+,+,+)$-octant. Figure~\ref{fig:2d-projection-b} shows the same thing but viewed from the $(-,-,+)$-octant. Figure~\ref{fig:2d-projection-c} shows a third view of the same configuration giving rise to the 2D projection. Here, the viewer sees the plane orthogonally from behind the origin. In this figure, $0<s_1<s_2<s_3$.}
    \label{fig:2d-projection}
\end{figure}

The assumption of strict inequalities in \eqref{cS} has the physical meaning 
that the polycrystal comprises a crystal that is not uniaxial. 
We denote by $\mathbb M^{3\times 3}$ the set of real 
$3\times 3$ matrices and by $\mathbb M^{3\times 3}_{\rm sym}$  its subset of symmetric matrices.
We define the 
set $K(S)\subset \mathbb M^{3\times 3}_{\rm sym}$ as follows:
\begin{equation}\label{setK}
K(S) :=\{
\lambda R^t S R: \lambda\in\R, \, R\in SO(3)
\}.
\end{equation}
Notice that $K(S)$ is unbounded.
Set  $C=[0,1]^3$ and denote by $W_{C}^{1,2}(\mathbb R^3;\mathbb R^3)$  the space of vector fields in 
$W^{1,2}_{\rm loc}(\mathbb R^3;\mathbb R^3)$ that are $C$-periodic.  
We look for  $A\in\ \mathbb M^{3\times 3}_{\rm sym}$ such that the following differential inclusion admits solutions
\begin{equation}\label{diffin}
 \nabla u\in K(S) \, \text{ a.e.}, \quad u-Ax \in W_{C}^{1,2}(\R^3; \R^3).
\end{equation}
Solutions are understood in the approximate sense of the existence of sequences $\{u_j\}\subset W_{C,A}^{1,2}\equiv W_{C}^{1,2}(\R^3; \R^3)+ A x$, 
that are bounded in $W^{1,2}_{\text{loc}}(\R^3;\R^3)$ and such that $\{\nabla u_j\}$ is 
$L^2_{\text{loc}}$-equi-integrable and
\begin{equation*}
{\dist} (\nabla u_j, K(S))\to 0 \text{ locally in measure}. 
\end{equation*}
We denote the set of all such $A$'s as $K^{app}(S)$. In the case when a set $E\subset \mathbb M^{3\times 3}$ is bounded, 
it is well-known that $E^{app}$ coincides with the quasiconvex hull of $E$ (see, e.g., \cite{Mnotes} for a general introduction to the subject). For unbounded sets, this may not be the case and different notions of quasi-convex hulls are to be considered (see, e.g.,\cite{Zhang},  \cite{Yan}).  
In the present setting $K^{app}(S)$ is related to the so-called 2-quasiconvex hull of $K(S)$ as defined in \cite{Yan}, with $K^{app}(S)$ being a subset if it.  
We point out that a slightly different definition of 2-quasiconvex hull, less restrictive than the one in \cite{Yan}, can be found in \cite{Zhang}.  
By definition of $K(S)$,
$K^{app}(S)$ is invariant under conjugation by any rotation, i.e., if $A\in K^{app}(S)$ then $ R^t A R \in  K^{app}(S)$ for each $R\in SO(3)$. Therefore, it suffices to characterize the eigenvalues of the 
elements of $K^{app}(S)$, which can thus be identified with a subset of $\R^3$. 
In fact, because of its original physical motivation (see Section \ref{origin}), we are interested in those elements of $K^{app}(S)$ whose eigenvalues lie in the interval  $[s_1,s_3]$.  
Moreover, since $K^{app}(S)$ is a cone, we may then focus on the 
specific section Tr$A=1$, which leads to the following definition 
\begin{equation}\label{defKstar}
 K^*(S):= \left\{A\in K^{app}(S)\!: A= 
 \left(
\begin{array}{ccc}
a_1&0&0\\
0&a_2&0\\
0&0&a_3
\end{array}
\right)\hspace{-1mm},
\, a_i \in [s_1,s_3], \, 
\sum_{i=1}^3a_i=1
\right\}.
\end{equation}
The set $K^*(S)$ can thus be identified with points in $\R^3$ lying in the hexagon of vertices at $S$ and all of its permutations.
The exact characterization of $K^*(S)$ is currently an open problem. Partial results were given in \cite{ACLM}, \cite{NM} and \cite{Nesi}.  In the present paper, we improve upon those results by exhibiting a set of attainable fields, which we denote $\LL$, strictly containing the previously known one established by Nesi $\&$ Milton \cite{NM} (see Proposition~\ref{MN_prop}).
Perhaps more importantly, we introduce micro-geometries displaying features that appear to be new. The mathematical analysis follows a scheme that is 
 well understood and known by various names, including the infinite-rank lamination scheme 
(\cite{ts}, \cite{Sche}, \cite{AH}, \cite{ct}, \cite{Albin}). 
 Our new results apply to matrices $S$ with distinct eigenvalues. In the uniaxial case, when $S$ has an eigenvalue of double multiplicity, they provide no improvements upon the previously known results.
 Our paper achieves an efficient and relatively quick scheme.
First, we introduce a class of  putative ``seed materials'' in the language of Nesi $\&$ Milton \cite{NM}, i.e., materials that can be shown to belong to $K^*(S)$ through an infinite-rank lamination in the spirit of the so-called Tartar's square \cite{ts}. 
We describe this class, which we denote by $\mathcal T^2(S)$ and show in Figure~\ref{fig:T2-locations}, by requiring the existence of specific rank-one connections, see \eqref{main}. 
 The latter implies, via Theorem \ref{laminateseq} and Corollary \ref{quasi-final}, the existence of a trajectory in $K^*(S)$ that starts in $S$, passes through $T$ and lands at a point that shares the same eigenvalues of $T$.   
 The reader may visualize the curve traced on $K^*(S)$ with the help of Figure \ref{fig:loops-and-masks}. 
 Note that such trajectories cross the uniaxial lines, i.e., the set of points where two eigenvalues coincide.
 \par
The second step of our scheme consists in constructing trajectories connecting $S$ with each uniaxial point in $\mathcal T^2(S)$ (the points $U_\alpha$ and $U_\beta$ in Figure~\ref{fig:sextant-annotated}). 
The associated rank-one connections in matrix space can be shown to exist and be unique, see Proposition \ref{prop_opt_curve}. 
The resulting trajectories in $K^*(S)$ defines the bounday of $\LL$.
The interior of $\LL$ is recovered by trajectories joining points on the boundary of  $\LL$.
Our scheme provides a quick selection of optimal microgeometries in a problem where one has an infinite choice of parameters, such as directions of laminations and rotations of the basic crystal.
It is reminiscent of work done in \cite{Milton2} for two-dimensional elasticity.
\par
In a forthcoming paper we will prove that $\LL$ enjoys the so-called stability under lamination (see Remark \ref{stability}). 
 We conclude by mentioning two natural open problems which could be the object of future work. First, the  lack of outer bounds for $K^*(S)$, even in the uniaxial case and even for exact solutions of \eqref{diffin}. Second,
 we do not know whether $\LL$ coincides with the 
rank-one convexification of $K(S)$ or is a subset of it
  (see, e.g., \cite{Mnotes}, \cite{Yan}, \cite{Zhang} for the notion of generalised convex hulls).

\section{Physical motivation: the polycrystal problem}\label{origin}
The differential inclusion, \eqref{diffin}, has a physical interpretation in the context of conductivity, which we summarize briefly. An outstanding review can be found in 
\cite[p. 19]{masterpiece}. In this interpretation, one considers equations describing a large class of physical problems in which one has a simply connected region $\Omega\subset \mathbb R^3$ occupied by a physical material. The equations give a linear (constitutive) relation of the form
\begin{equation}\label{Maxwell}
    j(x)= \sigma(x) e(x),\quad x\in \Omega.
\end{equation}
The field $e(x)$ is curl-free and the field $j(x)$ is divergence-free. The law \eqref{Maxwell} is an instance of the stationary Maxwell equations and describes many different physical problems such as electrical conductivity, dielectric materials, magnetism, thermal conduction, diffusion, flow in porous media, and antiplane shear. For electrical conductivity, $e(x)=-\nabla u$, with $u$, $e(x)$, $j(x)$ denoting electrical potential, electrical field and electrical current, respectively. The matrix $\sigma(x)$ is called the electrical conductivity, and \eqref{Maxwell} is a way of stating Ohm's law: there is a (locally) linear relation between $e$ and $j$. The conducting body is called homogeneous if $\sigma$ does not depend on $x\in \Omega$, and isotropic if $\sigma(x)$ is invariant under the action $\sigma\to R^t \sigma R$, for any $R\in SO(3)$. For classical materials, where $\sigma(x)$ is a symmetric matrix for every $x$, isotropy is equivalent to assuming that $\sigma$ is proportional to the identity matrix. In general, the eigenvalues of the conductivity matrix are called principal conductivities.  If at least two principal conductivities of $\sigma$ are distinct, then the material is called a crystal. 
This simply means that it conducts, like most crystals in nature, with different efficiency along different directions. By the linearity of Ohm's law, knowledge of the materials conductivity properties in three linearly independent directions is sufficient to determine $\sigma$. Special attention (see \cite{Sch}), has been given to the so-called {\it uniaxial} crystals, i.e., those for which two principal conductivties coincide, or, in geometrical terms, that exhibit cylindrical symmetry. In general, materials are not homogeneous. In inhomogeneous materials, the conductivity depends upon $x\in \Omega$. A very simple example is a material in which $\Omega$ is partitioned into $N$ regions $\Omega_j$ and in each of them one has a different matrix $\sigma$. In particular,
given a diagonal matrix 
\begin{equation}\label{Sigma}\Sigma=\left(
\begin{array}{ccc}
\sigma_1&0&0\\
0&\sigma_2&0\\
0&0&\sigma_3
\end{array}\right),\quad 0<\sigma_3< \sigma_2  <\sigma_1,
\end{equation} 
one may choose 
\begin{equation}\label{sigmazero}
\sigma(x)=\sum_{j=1}^NR_j^t(x) \Sigma R_j(x),\quad R_j\in SO(3).
\end{equation}
Such a conductivity describes a material which is anisotropic, and not homogeneous. Since the eigenvalues of $\sigma$ do not change, one speaks of a ``polycrystal'', because there are $N$ crystals, ``made of the single crystal'' with homogeneous conductivity $\Sigma$. The physical material is the same in every region; only its local orientation varies. We say that this represents a mixture of a given crystal with itself using $N$ orientations. In nature, $N$ can be extremely large. Mathematically, it makes sense to consider
$x\to R(x)\in SO(3)$ to be a generic measurable field, i.e.
\begin{equation}\label{sigma}
\sigma(x)=R^t(x) \Sigma R(x).
\end{equation}
In the theory of composite materials, one addresses the problem of determining some macroscopic or overall behaviour. We recall some well-known facts (see  \cite[Chapter 1]{masterpiece}).
One introduces the
 so-called ``effective'' or homogenized conductivity, $\sigma^*$, defined as follows:
\begin{equation}\label{effcondzero}
\sigma^* a \cdot a:=\inf_{u\in W_{C,a}^{1,2}}
\int_{C} \sigma(x)\nabla u(x) \cdot \nabla u(x) \,dx,\quad \forall a \in \mathbb R^{3},
\end{equation} 
where $W_{C,a}^{1,2}\equiv W_{C}^{1,2}(\R^3; \R)+ a\cdot  x$ is the scalar version of the space $W^{1,2}_{C,A}$ introduced in Section~\ref{sec:introduction}. 
It is well-known
that $\sigma^*$ is a $3\times 3$ symmetric matrix, and depends neither on $x$ nor on $a$, therefore, it can be determined by testing the variational principle \eqref{effcondzero} on three independent vectors $a^i$. By the linearity of the underlying Euler-Lagrange equations, we may sum three different contributions relative to three vectors $a^i$ and potentials $u^i$ and write 
\begin{equation*}
\sum_{i=1}^3\sigma^* a^i \cdot a^i=\sum_{i=1}^3\inf_{u^i\in W_{C,a^i}^{1,2}}
\int_{C} \sigma(x)\nabla u^i(x) \cdot \nabla u^i(x) \,dx.
\end{equation*}
This implicitly defines a $3\times 3$ matrix $A$ and vector potential $u=(u^1,u^2,u^3)$ such that 
\begin{equation}\label{effcond}
{\rm Tr}(A \sigma^* A^t):=\inf_{u\in W_{C,A}^{1,2}}
\int_{C} {\rm Tr}(Du(x) \sigma(x) Du^t(x)) \,dx,\quad \forall A \in \mathbb M^{3\times 3},
\end{equation} 
where, again, $W_{C,A}^{1,2}\equiv W_{C}^{1,2}(\R^3; \R^3)+ A x$. 
From a physical perspective, $\sigma^*$ describes the overall response of the composite material to an imposed external average field $\int_C Du(x)dx = A$.  Since the composite material involves fine-scale structure, $\sigma$ and $Du$ exhibit rapid oscillations inside of the periodic cell, $C$. The homogenized conductivity tensor averages out these rapid oscillations by directly relating the \emph{average} behaviors of the current and electric field across the entire periodic cell. Namely,
\begin{equation*}
\int_C \sigma Du^t\;dx = \sigma^*\int_C Du^t\;dx=\sigma^*A^t.
\end{equation*}

The set of all possible $\sigma^*$ that may arise while $R$ varies in $L^\infty(C;SO(3))$ is sometimes called the $G$-closure, denoted by $G(\Sigma)$. 
%
The $G$-closure, in the case under study, is a set of symmetric, positive definite matrices that is rotationally invariant, in the sense that if a diagonal matrix $\sigma^*\in G(\Sigma)$, then $R^t \sigma^* R\in G(\Sigma)$
 for any constant matrix $R\in SO(3)$. Therefore, it suffices to study the range of the eigenvalues $\sigma_i^*$ of effective conductivities. 
The first major advance in the polycrystal problem was obtained by Avellaneda et al. \cite{ACLM}, which established several optimal bounds and obtained some important partial results in terms of optimal microgeometries. 
The next result summarizes the main bounds found in \cite[Section 2]{ACLM}.
\begin{theorem}\label{avetal}
Let $\sigma$ be given as in \eqref{sigma}. Then $\sigma^*$ satisfies 
\begin{align} 
\label{cube-trace}
& \sigma_3 \leq \sigma^*_i \leq \sigma_1,\quad
{\rm Tr}\,\sigma^*\leq  {\rm Tr}\sigma,\\
\label{optimalitystar} 
& \det \sigma^*-\theta^2\,{\rm Tr} \sigma^*-2 \theta^3\geq 0,
\end{align}
where $\theta$ is  the least positive solution of \,
$\det \sigma-\theta^2\,{\rm Tr} \sigma-2 \theta^3=0$.
\end{theorem}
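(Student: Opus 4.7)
For the trace and eigenvalue bounds in \eqref{cube-trace} I would rely on the classical Voigt--Reuss argument. Inserting the constant trial field $u(x)=Ax$ into the variational characterization \eqref{effcond} gives $\Tr(A^t\sigma^* A) \leq \Tr(A^t\bar\sigma A)$, where $\bar\sigma := \int_C \sigma(x)\,dx$. Taking $A = I$ and using the pointwise identity $\Tr(R^t\Sigma R) = \Tr\Sigma$ yields $\Tr\sigma^* \leq \Tr\Sigma$. By the rotational invariance of $G(\Sigma)$ I may assume $\sigma^*$ diagonal; then choosing $A = e_i e_i^t$ and noting that $(R^t\Sigma R)_{ii} = \sum_k R_{ki}^2 \sigma_k \leq \sigma_1$ for every $R \in SO(3)$ gives $\sigma_i^* \leq \sigma_1$. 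The lower bound $\sigma_i^* \geq \sigma_3$ follows from the dual variational principle applied to the resistivity $\sigma(x)^{-1} = R^t \Sigma^{-1} R$, whose largest eigenvalue is $\sigma_3^{-1}$.

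The determinant bound \eqref{optimalitystar} is substantially deeper, and I would attack it by the translation (compensated compactness) method of Murat and Tartar. The plan is to introduce a quadratic form $T$ on $\mathbb M^{3\times 3}$, the \emph{translator}, with the two properties: (i) $E \mapsto \langle T E, E\rangle$ is a null Lagrangian on curl-free matrix fields---i.e.\ its integral over $C$ depends only on the average of $E$---and (ii) the pointwise inequality $\langle \sigma(x) E, E\rangle \geq \langle T E, E\rangle$ holds for every matrix $E$. In three dimensions such null Lagrangians arise from the cofactor structure of $\nabla U$ for vector fields $U\colon\R^3\to\R^3$. Assembling three scalar cell-problem solutions of \eqref{effcond} at $A = I$ into the matrix field $E(x) = [\nabla u^{(1)} \,|\, \nabla u^{(2)} \,|\, \nabla u^{(3)}]$, averaging the pointwise bound and using (i) to rewrite the right-hand side as a polynomial in $\sigma^*$ produces the claimed inequality.

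The central obstacle is the simultaneous choice of the translator $T$ and of the parameter $\theta$. By the rotational invariance of $\sigma(x) = R^t \Sigma R$, the positivity condition (ii) reduces to a condition on $\Sigma$ alone, and the sharpest admissible translator is the one for which $\Sigma - T$ has a nontrivial kernel. Parametrising $T$ by a scalar $\theta$ and imposing this degeneracy leads to a polynomial equation whose smallest positive root is precisely the $\theta$ appearing in the statement; verifying positivity reduces to a finite-dimensional algebraic check once $\Sigma$ is diagonalised, and the $3\times 3$ cofactor identities are the only structural ingredient needed. Integrating the translated pointwise inequality and substituting then reproduces the cubic $\det\sigma^* - \theta^2 \Tr\sigma^* - 2\theta^3 \geq 0$.
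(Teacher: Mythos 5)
The paper does not actually prove this theorem: it is imported verbatim from Avellaneda--Cherkaev--Lurie--Milton \cite{ACLM}, and the text that follows it merely points to the origin of each bound in a single sentence (ellipticity for the lower eigenvalue bound, the affine test field for the trace bound, and ``a rather elegant polyconvexification argument'' for \eqref{optimalitystar}). Your Voigt--Reuss treatment of \eqref{cube-trace} is correct and is in substance the same as the paper's remark: the affine trial field in \eqref{effcond} gives the trace bound and the upper eigenvalue bound, while your Reuss/dual step for the lower bound is interchangeable with the paper's appeal to ellipticity (pointwise $\sigma(x)\ge\sigma_3 I$ directly yields $\sigma^*\ge\sigma_3 I$ from the primal principle, so the dual detour is harmless but unnecessary).

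For \eqref{optimalitystar} you have described a strategy, not a proof, and there is a genuine gap at the step the whole theorem turns on. First, you call the translator ``a quadratic form,'' but the target inequality $\det\sigma^*-\theta^2\Tr\sigma^*-2\theta^3\ge 0$ is cubic in $\sigma^*$; the null Lagrangian that \cite{ACLM} exploits is the determinant of the gradient matrix field (together with the div--curl pairing), which is degree three, not two. So the translator you posit cannot have the announced algebraic structure as written. Second, and more importantly, you defer exactly the substantive algebra: the explicit quantity whose cell average is weakly continuous, the pointwise positivity check after substituting $\sigma=R^t\Sigma R$, and the verification that the degeneracy condition on $\Sigma$ reproduces the polynomial $\det\sigma-\theta^2\Tr\sigma-2\theta^3=0$ whose least positive root is $\theta$. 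Until those are carried out, what you have is a plausible plan that mirrors the paper's one-line attribution to polyconvexity, but not an argument that one could check. Given that the paper itself defers to \cite{ACLM}, your sketch is an acceptable gloss for the reader, but it should not be presented as a proof of \eqref{optimalitystar}.
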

The left-most bound in \eqref{cube-trace} follows immediately by the ellipticity of the matrix $\sigma$. The right-most is found using the affine test field $u(x) = A x$ in \eqref{effcond}.  The bound \eqref{optimalitystar}, instead, is one of the first instances of a rather elegant polyconvexification argument, (see \cite[Section 2]{ACLM}). 

The problem we pose is the attainability of the bound \eqref{optimalitystar}, which corresponds to establishing which $\sigma^*$ (identified with its eigenvalues) 
actually
lie on the convex surface determined by \eqref{optimalitystar},  which represents a portion of the boundary of $G(\Sigma)$.
The optimality of \eqref{optimalitystar} was established in \cite{ACLM} only under the severe condition that $\sigma$ be uniaxial, i.e., exactly two eigenvalues coincide. The construction uses a famous example by Schulgasser \cite{Sch} and, in the nowadays language would be called an ``exact solution'', in particular the microgeometry allows for a solution of \eqref {effcond} with $A=I$ and $u(x)= x\lvert x\rvert^{\alpha}$, where $\alpha$ is an appropriate exponent depending on the ratio of the two distinct eigenvalues of $\sigma$. 
A further advancement on the optimality of the lower bound \eqref{optimalitystar} was 
made by  Nesi \& Milton \cite{NM}, who recast the problem as a differential inclusion as 
clarified by the following lemma.
\begin{Lemma}\label{MiltonNesiThm}
Let 
\begin{equation}\label{defS}
S:=
\left(
\begin{array}{ccc}
\frac{\theta}{\theta +\sigma_1}&0&0\\
0&\frac{\theta}{\theta +\sigma_2}&0\\
0&0&\frac{\theta}{\theta +\sigma_3}
\end{array}
\right),
\end{equation}
with $\theta$ as in Theorem \ref{avetal}. 
If $S^*\in K^*(S)$, with $K^*(S)$ defined by \eqref{defKstar}, then $\sigma^*:= \theta((S^*)^{-1}-I)$ belongs to $G(\Sigma)$ and saturates the bound  
\eqref{optimalitystar}.
\end{Lemma}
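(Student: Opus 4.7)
My plan is to split the lemma into two pieces: (i) an algebraic identity showing that the constraint $\Tr S^* = 1$, built into the definition of $K^*(S)$, is equivalent to saturation of \eqref{optimalitystar}; and (ii) a construction turning a solution of the inclusion \eqref{diffin} into a polycrystal whose effective conductivity is exactly $\sigma^*$.

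\emph{Step (i).} Denote the eigenvalues of $S^*$ by $a_1,a_2,a_3\in[s_1,s_3]$, and set $e_2:=a_1a_2+a_1a_3+a_2a_3$. The eigenvalues of $\sigma^*=\theta((S^*)^{-1}-I)$ are $\theta(1-a_k)/a_k$, so
\beas
\det\sigma^*=\frac{\theta^{3}\prod_{k}(1-a_k)}{\det S^*},\qquad \Tr\sigma^*=\theta\left(\frac{e_2}{\det S^*}-3\right).
\eeas
Substituting yields
\beas
\det\sigma^*-\theta^{2}\Tr\sigma^*-2\theta^{3}=\frac{\theta^{3}}{\det S^*}\Bigl(\prod_{k}(1-a_k)-e_2+\det S^*\Bigr),
\eeas
and expanding $\prod_{k}(1-a_k)=1-\sum_k a_k+e_2-\det S^*=e_2-\det S^*$ (using $\sum_k a_k=1$) makes the bracket vanish.

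\emph{Step (ii), exact solutions.} Suppose first that $\nabla u=\lambda(x)R^{t}(x)SR(x)$ is an exact solution of \eqref{diffin} with $u-S^{*}x$ periodic. Symmetry of $\nabla u$ implies $u=\nabla\phi$ for a scalar $\phi$ with $\phi-\tfrac{1}{2}x^{t}S^{*}x$ periodic, so $\nabla u=D^{2}\phi$. Writing $\sigma:=R^{t}\Sigma R$ and using $R^{t}SR=\theta(\sigma+\theta I)^{-1}$, the inclusion rewrites as
\beas
(\sigma+\theta I)D^{2}\phi=\lambda\theta I,\qquad \lambda=\Tr D^{2}\phi=\Delta\phi.
\eeas
For each $k\in\{1,2,3\}$, the scalar $\Phi^{(k)}:=\partial_{k}\phi$ has $\nabla\Phi^{(k)}$ equal to the $k$-th column of $D^{2}\phi$, so the previous display gives $(\sigma\nabla\Phi^{(k)})_{m}=\lambda\theta\delta_{mk}-\theta(D^{2}\phi)_{mk}$ and consequently
\beas
\text{div}(\sigma\nabla\Phi^{(k)})=\theta\partial_{k}\lambda-\theta\partial_{k}\Delta\phi=0.
\eeas
Thus $\Phi^{(k)}$ is a scalar potential for the periodic polycrystal $\sigma$. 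Its mean gradient is $a_k e_k$ (since $\Phi^{(k)}-a_k x_k$ is periodic), and averaging gives $\langle\sigma\nabla\Phi^{(k)}\rangle=\theta(1-a_k)e_k$. Rescaling by $1/a_k$ (permissible since $a_k\ge s_1>0$), the effective conductivity of $\sigma$ equals $\diag\bigl(\theta(1-a_k)/a_k\bigr)=\sigma^*$.

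\emph{Step (ii), approximate solutions.} For general $S^*\in K^*(S)$ I apply the same construction to the approximating sequence $u_j$ from the definition of $K^{qc}(S)$: for each $j$, select measurably $\lambda_j(x)\in\R$ and $R_j(x)\in SO(3)$ such that $\lambda_j R_j^{t}SR_j$ is the nearest-point projection of $\nabla u_j$ onto $K(S)$, and set $\sigma_j:=R_j^{t}\Sigma R_j$. The computation above then holds up to an error controlled by $\dist(\nabla u_j,K(S))\to 0$ in measure; by $H$-compactness a subsequence of $\sigma_j$ $H$-converges to some $\bar\sigma\in G(\Sigma)$, and a div-curl weak-convergence argument forces $\bar\sigma=\sigma^*$. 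The hard part of the whole argument is this last passage: $K(S)$ is unbounded, so $\lambda_j$ is not a priori controlled, and a truncation tied to the $W^{1,2}_{\text{loc}}$-bound on $u_j$ is needed to handle both the nonlinear dependence $R_j\mapsto R_j^{t}\Sigma R_j$ and the averaged quadratic form.
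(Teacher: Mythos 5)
The paper does not prove Lemma \ref{MiltonNesiThm}; it is stated as a citation to Nesi \& Milton \cite{NM} and used without reproof. So there is no in-paper argument against which to compare, and your proposal has to be judged on its own. Your Step (i) is correct and complete: the expansion $\prod_k(1-a_k)=1-\sum_k a_k + e_2 - \det S^*$ combined with $\sum_k a_k = \Tr S^* = 1$ does make the bracket vanish, so the unit-trace normalization built into $K^*(S)$ is precisely saturation of \eqref{optimalitystar}. Your Step (ii) for exact solutions is also correct and is essentially the Nesi--Milton mechanism: the identity $R^tSR=\theta(R^t\Sigma R+\theta I)^{-1}$ turns the inclusion into $(\sigma+\theta I)D^2\phi = (\Delta\phi)\theta I$, the potentials $\Phi^{(k)}=\partial_k\phi$ solve $\mathrm{div}(\sigma\nabla\Phi^{(k)})=0$, and averaging the $k$-th column of $\sigma D^2\phi = \theta[(\Delta\phi)I - D^2\phi]$ gives $\langle\sigma\nabla\Phi^{(k)}\rangle=\theta(1-a_k)e_k$ against the mean field $a_k e_k$, so $\sigma^*=\mathrm{diag}\bigl(\theta(1-a_k)/a_k\bigr)$.

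The genuine gap is where you say it is, in the approximate case, and it is worth being explicit about why the sketch does not close. The approximating sequence $u_j$ from the definition of $K^{qc}(S)$ has gradients that converge to $K(S)$ only locally in measure, so two things fail simultaneously. First, $\nabla u_j$ need not be symmetric, so you cannot write $u_j=\nabla\phi_j$ and read off the three scalar conduction problems; the Hessian structure was essential to the exact computation. Second, since $K(S)$ is an unbounded cone, the scalar field $\lambda_j(x)$ extracted by projection onto $K(S)$ is not controlled by the $W^{1,2}_{\mathrm{loc}}$ bound on $u_j$ alone, so the error terms in the currents $\sigma_j\nabla\Phi^{(k)}_j$ are not obviously $o(1)$ in the topology where the div-curl lemma applies. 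A cleaner route, and the one consistent with how this paper actually produces elements of $K^*(S)$ (Theorem \ref{laminateseq}, Corollary \ref{quasi-final}, Proposition \ref{gradienti}), is to prove the implication for barycenters of laminates of finite order in $K(S)$: there the underlying microgeometry is a hierarchical laminate of the physical crystal $\Sigma$, the effective conductivity can be computed layer by layer using the rank-one lamination formula, and one verifies directly that it equals $\theta((S^*)^{-1}-I)$; passing to the infinite-rank limit is then a matter of $G$-convergence of uniformly elliptic, uniformly bounded tensors rather than a div-curl argument with uncontrolled $\lambda_j$. Your proposal identifies the correct mechanism but the last paragraph needs to be replaced by this laminate argument (or by a careful truncation scheme) before it is a proof.
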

Lemma \ref{MiltonNesiThm} implies that the bound \eqref{optimalitystar} is attained if there exist (approximate) solutions to the differential inclusion \eqref{diffin} with $S$ given by \eqref{defS}, which, by definition of $\theta$, satisfies $\tr S=1$.
From \eqref{defS} and the relationship between $S^*$ and $\sigma^*$ one can see that 
the eigenvalues of $S^*$ must lie in the interval $[s_1,s_3]$, 
which leads to the definition \eqref{defKstar} of $K^*(S)$.
The main contribution of \cite{NM} was to consider arbitrary $\sigma$'s, in particular non uniaxial ones, and prove that a large part of the surface defined by \eqref{optimalitystar} is actually attained using an infinite lamination procedure (see Section \ref{vecchio}).
 The Milton-Nesi construction resembles the well-known Tartar's square \cite{ts} 
 (see also \cite{Sche}, \cite{AH} and \cite{ct}, which use a very similar construction)
 and permits to find a set $Z$ of three $3\times 3$ matrices such that they are not rank-two connected, but for which an approximate solution to the differential inclusion $B\in Z, {\rm Div}\, B=0$ exists (see also \cite{GN}, \cite{PP}, \cite{PS} and \cite{Ang}).
 In fact, the problem of the optimality of the bound 
 \eqref{optimalitystar}, as well as other bounds for effective conductivities, can be equivalently rewritten as a differential inclusion of Div-free type. This route has not been exhaustively pursued yet and may be the object of future work.
\section{Rank-one connections: the set 
$\mathcal T^{1}(S)$}
\label{sec-T1}
\noindent
We look for the set of symmetric matrices with unit trace that are rank-one connected to a scalar multiple of $S$.
In what follows $\Sd$ denotes the set of unit vectors in $\R^3$.
We will need the following definition.
\begin{definition}\label{MT}
\noindent
Let $S$ be as in \eqref{S}, \eqref{cS}.  The set 
${\mathcal T^{1}(S)}$ consists of diagonal matrices $T$ with eigenvalues $t_i$ satisfying 
\begin{equation}\label{ordering}
s_1\leq t_1\leq t_2\leq t_3\leq s_3,  
\end{equation}
\begin{equation}\label{oldbp}
\begin{array}{cc}
\exists \,R\in SO(3), n\in\Sd, \la \in \mathbb R :
&
R^tTR = \la\, S +(1-\la) n\otimes n. 
\end{array}\end{equation}
\end{definition}
Note that, by \eqref{oldbp}, $t_1+t_2+t_3=1$. To achieve a representation of the set $\mathcal T^{1}(S)$,
 we define the following sets of numbers and intervals.
\begin{definition}\label{A(T,S)}
Assume that the pair $(T,S)$ satisfy \eqref{S}, \eqref{cS} and \eqref{ordering}. Set
\begin{align*}
& \al_- (T,S) :=\max\left(\frac{t_1}{s_2},\frac{t_2}{s_3}\right), 
& \al_+ (T,S):=\min\left(\frac{t_2}{s_2},\frac{t_3}{s_3}\right) ,\\
&\beta_-(T,S):= \max \left(\frac{t_1}{s_1},\frac{t_2}{s_2}\right),
&\beta_+(T,S):=\min \left(\frac{t_2}{s_1},\frac{t_3}{s_2}\right), \\
& A_\alpha(T,S):=[\al_- (T,S), \al_+ (T,S)],
&A_\beta(T,S):=[\beta_-(T,S), \beta_+(T,S)],\\
&A(T,S) :=A_\alpha(T,S)\cup A_\beta(T,S). 
\end{align*}
The interior and the boundary of $A(T,S)$ are denoted by $A^{\circ}(T,S)$ and $\partial A(T,S)$ respectively, and we adopt the convention $[a,b] =\emptyset$, if $a>b$.
\end{definition}
\begin{remark}\label{rem100}
From the definition it follows that  $\alpha_+(T,S)\leq 1\leq \beta_-(T,S)$ and that  $\alpha_+(T,S)=\beta_-(T,S)=1$ if and only if $t_i=s_i$ for each $i=1,2,3$.  
Hence, if $\{s_1,s_2,s_3\}\neq \{t_1,t_2,t_3\}$,  the set $A(T,S)$ is always the union of two disjoint bounded intervals. 
\end{remark}
\noindent
The next algebraic lemma clarifies when condition \eqref{oldbp} holds. Its proof is postponed to Section \ref{dimoLemma}.
%
\begin{Lemma}\label{LemmaF}
Assume that the pair $(T,S)$ satisfy \eqref{S}, \eqref{cS} and \eqref{ordering}.
If $T\in \mathcal T^{1}(S)$, then $\la \in  A(T,S)$. Conversely, if $A(T,S) \neq \emptyset$, then $T\in \mathcal T^{1}(S)$.
Furthermore, if $A(T,S) \neq \emptyset$, then for each
$\la\in  A(T,S)\setminus{\{1\}}$, the vector $n=(n_1,n_2,n_3)$ that satisfies \eqref{oldbp} is 
determined, not uniquely, by the following equations
\begin{equation}\label{ni}
\begin{array}{cc}
\displaystyle{n_1^2 = \frac{(t_1 -\la s_1)(t_2 -\la s_1)(t_3 -\la s_1)}{\la^2(1-\la)
(s_2 - s_1)(s_3 - s_1)} := n_1^2(T,S,\la)},
\\\\
\displaystyle{n_2^2 = \frac{(t_1 -\la s_2)(t_2 -\la s_2)(t_3 -\la s_2)}{\la^2(1-\la)
(s_3 - s_2)(s_1 - s_2)} := n_2^2(T,S,\la)},
\\\\
\displaystyle{n_3^2 = \frac{(t_1 -\la s_3)(t_2 -\la s_3)(t_3 -\la s_3)}{\la^2(1-\la)
(s_1 - s_3)(s_2 - s_3)} := n_3^2(T,S,\la)}.
\end{array}
\end{equation}
\end{Lemma}

\begin{remark} \label{noninteressante}
Note that
 \begin{equation}\label{casoT=S}
 A(S,S)=\left[\alpha_-(S,S),\frac{1}{\alpha_-(S,S)}\right] =
 \left[\max\left(\frac{s_1}{s_2}, \frac{s_2}{s_3}\right), \min\left(\frac{s_2}{s_1}, \frac{s_3}{s_2}\right)\right]
 \neq \emptyset,
\end{equation}
and therefore $S\in \mathcal T^{1}(S)$. 

As already observed in Remark \ref{rem100}, we have $1\in A(T,S)$ if and only if $t_i=s_i$ for each $i=1,2,3$.  
The case $\la=1$ is not interesting since there is no actual rank-one connection and the relation \eqref{oldbp} is trivially satisfied by any $n$. 
Moreover if $S$ is uniaxial, it can be easily checked that   $A(S,S)=\{1\}$.   
\end{remark} 
%
%
%
\section{The set $\mathcal T^2(S)$}
\label{sec-t2}
We now define a subset of ${\mathcal T}^1(S)$ which we denote by $\mathcal T^2(S)$. We will prove later that 
$\mathcal T^2(S)\subset K^*(S)$ (see Corollary \ref{quasi-final}).
 Define the function $F:[s_1,s_3]^2 \to \R$  as
\begin{equation}\label{defF}
    F(x,y):=\frac{s_1\,s_3}{s_2}\cdot \frac{\,x\,y}{x^2+x\,y+y^2-s_2\,(x+y)}.
\end{equation}
\begin{definition}\label{defT22} 
Let $S$ be as in \eqref{S}, \eqref{cS}.
The set  $\T^2(S)$ consists of all  $T$ that, in addition to \eqref{ordering}, satisfy either
    \begin{equation}\label{condnec1}
\left\{\begin{array}{ll}
(\alpha_-(T,S),\alpha_+(T,S))=\displaystyle{\left(\frac{t_1}{s_2},\frac{t_2}{s_2}\right)}\\\\
t_3=F(t_1,t_2)
\end{array}
\right.
\end{equation}
or
\begin{equation}\label{condnec2}
\left\{\begin{array}{ll}
(\beta_-(T,S),\beta_+(T,S))=\displaystyle{\left(\frac{t_2}{s_2},\frac{t_3}{s_2}\right)}\\\\
t_1=F(t_3,t_2)
\end{array}\right.
\end{equation}
where $F$ is given by \eqref{defF}.
\end{definition}
The curves defined by \eqref{condnec1} and \eqref{condnec2} can be 
visualised in the unit-trace plane with the help of Figure \ref{fig:T2-locations}. 
\begin{remark}
Note that
$$(\alpha_-(T,S),\alpha_+(T,S))=\displaystyle{\left(\frac{t_1}{s_2},\frac{t_2}{s_2}\right)}
\Longleftrightarrow
\displaystyle{ \frac{t_2}{t_3}\leq \frac{s_2}{s_3}\leq  \frac{t_1}{t_2}},
$$
$$
(\beta_-(T,S),\beta_+(T,S))=\displaystyle{\left(\frac{t_2}{s_2},\frac{t_3}{s_2}\right)}
\Longleftrightarrow
 \frac{t_1}{t_2}\leq \frac{s_1}{s_2}\leq \frac{t_2}{t_3}.
$$
\end{remark}
\begin{remark}
One has that $\mathcal T^2(S)\subset \mathcal T^1(S)$. This follows from Proposition 
\ref{prop5.2} if $T$ is not uniaxial, and from (ii) of Proposition \ref{prop_opt_curve} if $T$ is uniaxial.
\end{remark}
\begin{proposition}\label{prop5.2}
Let $T\in  \mathcal T^2(S)$ have distinct eigenvalues and let
\begin{equation}\label{scelta}
(\lambda_1,\lambda_2)=
\left\{\begin{array}{ll}
\displaystyle{\left(\frac{t_1}{s_2},\frac{t_2}{s_2}\right)}\quad 
\textit{if \eqref{condnec1} holds,}\\\\
\displaystyle{\left(\frac{t_3}{s_2},\frac{t_2}{s_2}\right)} \quad 
\textit{if \eqref{condnec2} holds.}
\end{array}
\right.
\end{equation}
Then there exist  $R_1, R_2 \in SO(3),\, n\in \Sd$ such that 
\begin{equation}\label{main}
R_1^t TR_1 = \la_1 S +(1-\la_1)\, n\otimes n,\quad
R_2^t TR_2 = \la_2 S +(1-\la_2)\, n\otimes n.  
\end{equation}
\end{proposition}

The proof of Proposition \ref{prop5.2} is postponed to Section \ref{dimopropoT2}.
\subsection{Solution to the underlying differential inclusion}\label{sec-di}
\label{sec:differential-inclusion}
\noindent
The next  two results show that if $T\in \T^2(S)$, then all points that lie on a specific trajectory starting from $S$ and ending 
at $T$ on the unit trace plane actually belong to $ K^*(S)$.
This is proved by exhibiting an infinite rank laminate that generically uses an infinite set of rotations $R\in SO(3)$, and thus infinitely many distinct rank-one directions of lamination. 
Figure \ref{fig:loops-and-masks} shows  rank-one curves connecting $S$ and  matrices in $\T^2(S)$. By  Corollary \ref{quasi-final} such curves lie entirely in $K^*(S)$. 
In the sequel we will use the notions of laminate and splitting of a laminate, whose definitions are recalled in the Appendix \ref{CT} (see in particular Definition \ref{laminates}).
For $A,B\in \mathbb{M}^{3\times 3}$, we denote by $(A,B)$ and $[A,B]$
the open and closed segment connecting $A$ and $B$, respectively.

\begin{theorem}\label{laminateseq}
Let $T\in {\mathcal T}^2(S)$ have distinct eigenvalues and let $\lambda_1, R_1$ be given by 
Proposition \ref{prop5.2}. 
There exist $C>0$,  $\bar r>2$ such that for each $A\in (\lambda_1 S, R_1^tTR_1)$, there exists a sequence of laminates of finite 
order $\nu_k\in  \laminates$ satisfying
\begin{enumerate}
\item[(i)] $\bar\nu_k=A$ \  $\forall \ k\in\N$; \smallskip
\item[(ii)] $\nu_k(\mathbb{M}^{3\times 3}\setminus K(S))\to 0$ \ $k\to +\infty$; \smallskip
\item[(iii)] $\int_{\mathbb{M}^{3\times 3}} |F|^r\  d\nu_k(F) < C, \quad \, \forall \, k\in \N,\quad  \forall\, r\in [1, \bar r)$. 
\end{enumerate}
\end{theorem}
\begin{proof}
We construct the sequence $\nu_k$ by successive splitting.
By Proposition \ref{prop5.2} we have
 \begin{equation}\label{R1R2}
 \left\{\begin{array}{lcc}
 R_1^tTR_1=\lambda_1\,S+(1-\lambda_1)\,n\otimes n 
 \\\\
 R_2^tTR_2=\lambda_2\,S+(1-\lambda_2)\,n\otimes n.
 \end{array}
\right.
 \end{equation}
Since $A\in (\lambda_1 S, R_1^tTR_1)$, there exists 
$p\in (0,1)$ such that $A= p\lambda_1 S + (1-p) R_1^tT R_1$.
We define the first laminate of the sequence as $\nu_1 := p\delta_{\lambda_1 S} + (1-p) \delta_{R_1^tTR_1} $. 
The next step is to replace $ \delta_{R_1^tTR_1} $ by the sum of two Dirac masses supported in rank-one connected matrices.
For this purpose let
\begin{equation}\label{q}
q:=\frac{\lambda_1(1-\lambda_2)}{\lambda_2(1-\lambda_1)}, \quad \lambda:=\frac{\lambda_2}{\lambda_1},
 \end{equation}
 and notice that,
since by \eqref{scelta} either $0<\lambda_1<\lambda_2<1$ or $1<\lambda_2<\lambda_1$, 
 one has that  $q\in (0,1)$ 
with $\lambda >1$ in the first case, and $\lambda<1$ in the second case.
 Moreover, by \eqref{R1R2}
 \begin{equation}\label{seconda}
\frac{1}{\lambda}R_2^tTR_2=\lambda_1 S+q(1-\lambda_1) n\otimes n.
 \end{equation}
 Let us define
 \begin{equation}\label{varie1}
 \left\{\begin{array}{llll}
S_0= \lambda_1 S, &  T_0=R_1^t T R_1, & Q=R_2^t  R_1,\\\\
S_1=\lambda Q^t  S_0 Q, &T_1=\lambda Q^t  T_0 Q,  &M=(1-q)S_1+q T_1.
\\
 \end{array}
 \right.
 \end{equation}
 \vspace{2mm}
 
 \noindent
 In view of \eqref{varie1} we can write $\nu_0 = p\delta_{S_0} + (1-p) \delta_{T_0} $. To perform the first splitting we 
 check  that $M=T_0$ and replace $\delta_{T_0}$ by  $(1-q)\delta_{S_1}+q \delta_{T_1}$. By the first equation in \eqref{R1R2}, we have
 $$
 T_0=S_0+(1-\lambda_1) n\otimes n.
 $$
Using \eqref{varie1}, we have
 \begin{equation}
\begin{array}{cccc}
 M=\lambda Q^t[(1-q) S_0 +q T_0]Q=\lambda Q^t[(1-q)S_0+ q S_0+q(1-\lambda_1)n\otimes n]Q\\\\
= \lambda Q^t[ S_0+q(1-\lambda_1)n\otimes n]Q=\lambda Q^t[ \lambda_1 S+q(1-\lambda_1)n\otimes n]Q.
 \end{array}
 \end{equation}
 Now we use \eqref{seconda} and the previous equation and get
 \begin{equation}
 Q M Q^t=
\lambda [ \lambda_1 S+q(1-\lambda_1)n\otimes n]=R_2^t T R_2.
 \end{equation}
 Therefore, we have
  \begin{equation}
 M=T_0 = R_1^t T R_1 \Longleftrightarrow Q T_0 Q^t=
R_2^t T R_2\Longleftrightarrow Q R_1^t T R_1 Q^t=R_2^t T R_2 \Longleftrightarrow 
R_2 Q R_1^t =I.
 \end{equation}
 The latter follows from the definition of $Q$.
 We can now define the second laminate as
 $$
 \nu_1 : =  p\delta_{S_0} + (1-p)[(1-q)\delta_{S_1}+q \delta_{T_1}].
 $$
 Notice that $\spt(\nu_1)\in K(S)\cup \{T_1 \}$.
 To iterate the above procedure, we introduce the following sequences:
\begin{equation}\label{formule-SkTk}
S_k:=\lambda^k (Q^k)^t S_0 Q^k, \quad T_k :=\lambda^k (Q^k)^t T_0 Q^k . 
\end{equation}
We note that for each $k$ the pair $(T_k,S_k)$ is rank-one connected. Indeed, 
   \begin{equation}\label{rango1con}
   T_k-S_k=\lambda^k (Q^k)^t (T_0-S_0) Q^k =\lambda^k (1-\lambda_1) (Q^k)^t n \otimes n Q^k.
 \end{equation}
 Moreover, for each $k$,
   \begin{equation}\label{induk}
T_k= (1-q) S_{k+1}+q T_{k+1}.
   \end{equation}
   We prove \eqref{induk} by induction. The case $k=0$ has been proved in the previous part. So assume 
     \begin{equation}
T_{k-1}= (1-q) S_{k}+q T_{k},
   \end{equation}
 and prove that
    \begin{equation}\label{toprove}
T_{k}= (1-q) S_{k+1}+q T_{k+1}.
   \end{equation}
   We start computing the right-hand side
       \begin{equation}
       \begin{array}{l}
 (1-q) S_{k+1}+q T_{k+1}=\lambda^{k+1}(Q^{k+1})^t[(1-q) S_0 +q T_0]Q^{k+1}
\\\\
=\lambda^{k+1} (Q^{k+1})^t[(1-q)S_0+ q S_0+q(1-\lambda_1)n\otimes n]Q^{k+1}=
\lambda^{k+1} (Q^{k+1})^t[ S_0+q(1-\lambda_1)n\otimes n]Q^{k+1}\\\\
=\lambda^{k+1} (Q^{k+1})^t[ \lambda_1 S+q(1-\lambda_1)n\otimes n]Q^{k+1}.
  \end{array} \end{equation}
  We now use \eqref{seconda} and get
  $$
  (1-q) S_{k+1}+q T_{k+1}=\lambda^{k+1} (Q^{k+1})^t\left[ \frac{1}{\lambda} R_2^t T R_2\right]Q^{k+1}.
  $$
  To prove \eqref{toprove}, we are left with proving that
  \begin{equation}
 T_k= \lambda^{k+1} (Q^{k+1})^t\left[ \frac{1}{\lambda} R_2^t T R_2\right]Q^{k+1},
  \end{equation}
  namely that
  \begin{equation}
\lambda^{k}(Q^k)^t T_0Q^k= \lambda^{k} (Q^{k+1})^t  R_2^t T R_2 Q^{k+1}.
  \end{equation}
  The latter is equivalent to
  \begin{equation}
T_0= Q^t R_2^t T R_2 Q\Longleftrightarrow R_1^t T R_1=Q^t R_2^t T R_2 Q\Longleftrightarrow R_1^t=Q^t R_2^t\Longleftrightarrow Q=R_2^t R_1 .
  \end{equation}
We can now define the sequence $\nu_k$ recursively. 
In order to obtain the laminate $\nu_{k+1}$ from $\nu_k$, we use  \eqref{rango1con} and \eqref{toprove} to replace $\delta_{T_{k}}$ in 
$\nu_k$ by $(1-q)\delta_{S_{k+1}}+q \delta_{T_{k+1}}$.
By construction each $\nu_k$ has barycenter $A$ and satisfies
\begin{equation*}
\spt(\nu_k) \subset K\cup \{ T_{k}\} ,\quad \nu_k(T_{k}) = (1-p)q^k \to 0 \quad k\to 0.
\end{equation*}
It remains to check that (iii) holds. Using \eqref{formule-SkTk} we explicitly compute 
 \begin{align*}
 \int_{\R^{3\times 3}} |F|^r\  d\nu_{k+1}(F) & = p|S_0|^r + (1-p) (1-q) \sum_{j=0}^k q^j |S_{j+1}|^r 
  + (1-p)q^{k+1} |T_{k+1}|^r 
 \\
 & = p |S_0|^r + (1-p) (1-q) |S_0|^r \lambda^r \sum_{j=0}^k (q\lambda^r)^j  
 + (1-p)(q\lambda^r)^{k+1}|T_0|^r.
\end{align*}
If $\lambda_1=\frac{t_3}{s_2}$ and $\lambda_2=\frac{t_2}{s_2}$, then one has that $\lambda\in (0,1)$ and $q\lambda^r\in (0,1)$ 
for each $r\geq 1$, which implies 
${\displaystyle \sup_k  \int_{\R^{3\times 3}} |F|^r\  d\nu_{k}(F) <C }$ for each $r\geq1$. 
If instead $\lambda_1=\frac{t_1}{s_2}<\lambda_2=\frac{t_2}{s_2}<1$, then 
$$
q\lambda^r = \frac{s_2 -t_2}{s_2 -t_1}\Big(\frac{t_2}{t_1}\Big)^{r-1}.
$$
Therefore if we set 
$$
\bar r:= 1+ \log_{\frac{t_2}{t_1}}\Big(\frac{s_2 -t_1}{s_2 -t_2}\Big),
$$ 
then, recalling that $t_1<t_2<t_3$ and observing that $(s_2-t_1)/(s_2-t_2) > t_2/t_1$, one has $\bar r >2$ and $q\lambda^r < 1$ for each $r < \bar r$, which yields (iii).
\end{proof}
\begin{remark}
The set of rank-one directions used in the proof of Theorem \ref{laminateseq}, namely the set $\{(Q^t)^k n : k\in \N\}$, generically contains infinitely many distinct elements. In the case when the rotation angle is rational, there exists $k\in \N$ such that  
$(Q^t)^k = Q^t$ and therefore the set of rank-one directions is finite.
\end{remark}
\begin{corollary}\label{quasi-final}
If $T\in {\mathcal T}^2(S)$, then for each $A\in[\lambda_1 S, R_1^tTR_1]$ there exists $R\in SO(3)$ such that the matrix ${\displaystyle \frac{1}{\Tr A}R^t A R}$ belongs to $K^*(S)$. 
\end{corollary}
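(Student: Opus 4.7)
The plan is to pair Theorem~\ref{laminateseq} with standard facts about gradient Young measures to obtain $A\in K^{qc}(S)$, then exploit the scale- and rotation-invariance of $K^{qc}(S)$ to normalize $A$ into $K^*(S)$.

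Given $T\in\mathcal T^1(S)$ satisfying \eqref{main} and $A\in[\lambda_1 S,R_1^t T R_1]$, Theorem~\ref{laminateseq} furnishes a sequence of finite-order laminates $\nu_k$ with barycenter $A$, mass asymptotically concentrated on $K(S)$, and a uniform second-moment bound (the endpoint $A=\lambda_1 S\in K(S)$ is trivial). By the classical realisation of laminates via iterated splittings into piecewise-affine deformations (Appendix~\ref{CT} and \cite{Mnotes}), each $\nu_k$ is the gradient Young measure of a sequence of functions in $W^{1,2}_{C,A}(\R^3;\R^3)$; a diagonal extraction then produces a bounded sequence $\{u_j\}\subset W^{1,2}_{C,A}(\R^3;\R^3)$ with $\dist(\nabla u_j, K(S))\to 0$ locally in measure. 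This is exactly the defining property of $K^{qc}(S)$, so $A\in K^{qc}(S)$.

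Next I would use the structural symmetries of $K^{qc}(S)$. By \eqref{setK}, $K(S)$ is a cone invariant under conjugation by $SO(3)$, and both properties descend to $K^{qc}(S)$. Choose $R\in SO(3)$ that diagonalises the symmetric matrix $A$ and set $\tilde A:=\tfrac{1}{\Tr A}R^t A R\in K^{qc}(S)$. Writing $A=p\lambda_1 S+(1-p)R_1^tTR_1$ for a unique $p\in[0,1]$, Weyl's inequality yields
\begin{equation*}
\lambda_{\min}(A)\geq p\lambda_1 s_1+(1-p)\,t_1\geq s_1\bigl(p\lambda_1+1-p\bigr)=s_1\,\Tr A,
\end{equation*}
and symmetrically $\lambda_{\max}(A)\leq s_3\,\Tr A$, since $s_1\leq t_1$ and $t_3\leq s_3$ by \eqref{ordering}. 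Dividing by $\Tr A>0$ shows that the eigenvalues of $\tilde A$ lie in $[s_1,s_3]$ and sum to $1$; hence $\tilde A\in K^*(S)$, proving the first assertion.

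The last sentence of the corollary follows from Proposition~\ref{prop5.2}, which guarantees \eqref{main} for every $T\in\mathcal T^2(S)$ with distinct eigenvalues; the uniaxial elements of $\mathcal T^2(S)$ can be reached as limits of non-uniaxial ones, exploiting closedness of $K^*(S)$. I expect the only genuine obstacle to be the first step---passing from the abstract finite-order laminates $\nu_k$ to an actual approximating sequence in $W^{1,2}_{C,A}$ with affine boundary data---but this is a well-established consequence of the splitting construction recalled in Appendix~\ref{CT}. The eigenvalue bound via Weyl's inequality is then an immediate computation enabled precisely by hypothesis \eqref{ordering}.
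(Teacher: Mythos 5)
Your proposal is correct and follows the same route as the paper's own (very terse) proof, which simply cites Theorem~\ref{laminateseq} together with Proposition~\ref{gradienti}, and appeals to Proposition~\ref{prop5.2} plus a closure argument for $T\in\mathcal T^2(S)$. You have merely spelled out the two implicit ingredients the paper leaves to the reader—the conversion of the laminate sequence into an admissible approximating sequence in $W^{1,2}_{C,A}$, and the Weyl-inequality check that the normalized eigenvalues land in $[s_1,s_3]$—and these are both accurate; only the endpoint $A=R_1^tTR_1$ is left unstated, but it is absorbed by the same closure argument you already invoke.
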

\begin{proof}
If $T$ is not uniaxial, then the proof is a consequence of Theorem \ref{laminateseq} and Proposition \ref{gradienti}. 
Precisely, for each $k\in \N$, we define a map $u_k \in  W_{C,A}^{1,2}$ by using Proposition \ref{gradienti} applied to $\nu = \nu_k$ and 
$\delta = \delta_k$, where $\delta_k$ is any sequence satisfying
$$
\delta_k <\min\Big\{\frac{1}{k},\, \min_{A_i,A_j\in\,\spt \nu_k}  \frac 1 2 
|A_i - A_j|\Big\}.
$$ 
The sequence $\{u_k\}$ thus obtained is an approximate solution to \eqref{diffin}.
The case of uniaxial $T$ is recovered by a closure argument. 

\end{proof}
\begin{figure}\label{bellafigura}
    \centering
    \begin{subfigure}{0.5\textwidth}
    \includegraphics[width=\textwidth]{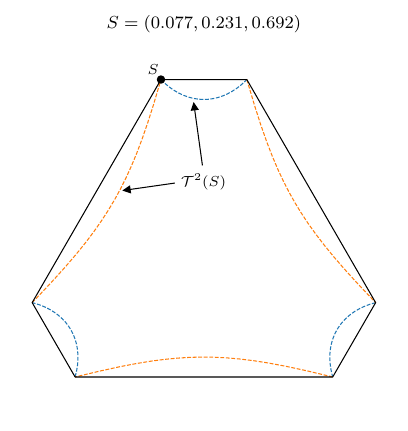}
    \caption{}
    \label{fig:T2-locations}
    \end{subfigure}%
    \begin{subfigure}{0.5\textwidth}
    \includegraphics[width=\textwidth]{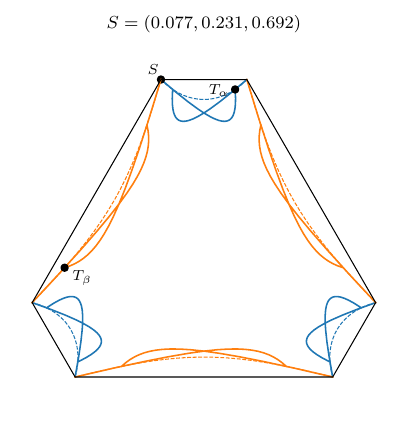}
    \caption{}
     \label{fig:loops-and-masks}
    \end{subfigure}
    \caption{The dashed curves in both figures show the set of $\mathcal{T}^{2}(S)$ fields defined in~\eqref{condnec1} and~\eqref{condnec2}. The solid curves in the right figure show rank-one curves that are formed by laminating $S$ with $\mathcal{T}^{2}(S)$ fields, $T_\alpha$ satisfying~\eqref{condnec1} and $T_\beta$ satisfying~\eqref{condnec2}.}
\end{figure}
\section{Analytic characterization of the set $\LL$}\label{sec-t2-bis}
\noindent  
The goal of the present section is to define the set $\LL$, which provides a new inner bound for $K^*(S)$.
Let us briefly describe how to obtain it before giving its precise definition.   
By Corollary \ref{quasi-final} all points in $\T^2(S)$ belong to $K^*(S)$. Among these we select the uniaxial ones, namely the points of intersection 
of the curves \eqref{condnec1} and \eqref{condnec2} with the uniaxial lines $t_1=t_2$ and $t_2=t_3$ respectively. We denote such points by $U_{\alpha}$ and 
$U_{\beta}$ (see Definition \ref{def_U} and Figure \ref{figura2}).
We then define $\Gamma_{\alpha}$ and $\Gamma_{\beta}$ as the projection on $K^*(S)$ of the rank-one segments connecting a specific multiple of $S$ to 
$R_\alpha^t U_{\alpha}R_\alpha$ and $R_\beta^t U_{\beta}R_\beta$ respectively (Definition \ref{def33}).  
The set $\LL$ is finally defined as the set enclosed, in the unit trace plane, by  $\Gamma_{\alpha}$, $\Gamma_{\beta}$ and appropriately reflected and rotated copies of $\Gamma_{\alpha}$ and $\Gamma_{\beta}$ (see Definitions \ref{def:Gamma-construction}, \ref{matL} and  Figures~\ref{fig:sextant-annotated}-\ref{fig:hexagon-annotated}).
Throughout the present section, we assume that $S$ and $T$ satisfy \eqref{S}, \eqref{cS} and \eqref{ordering}. 
\begin{definition}\label{def_U}
Set \begin{equation}\label{upm}
\begin{array}{cc}
U_{\alpha}=
\left(
\begin{array}{ccc}
u_{\alpha}&0&0\\
0&u_{\alpha}&0\\
0&0&1-2 u_{\alpha}
\end{array}
\right), 
\quad U_{\beta}=
\left(
\begin{array}{ccc}
1-2 u_{\beta}&0&0\\
0&u_{\beta}&0\\
0&0&u_{\beta}
\end{array}
\right),
\end{array}
\end{equation}
where $u_{\alpha}$ and $u_{\beta}$ are the smallest and greatest roots of 
\begin{equation}\label{H}
H(x):=6 s_2\,x^2+ x\,(s_1 s_3-3 s_2-4 s_2^2)+ 2 s_2^2
\end{equation}
respectively.
Set
\begin{equation}\label{mpd1}
\begin{array}{ccc}
n_\alpha=\left(
\begin{array}{c}
\cos \varphi_\alpha\\0\\ \sin \varphi_\alpha\end{array}
\right),\,
R_\alpha=
\left(
\begin{array}{ccc}
0&-1&0\\
-\cos \theta_\alpha &0&\sin \theta_\alpha\\
-\sin \theta_\alpha &0&-\cos  \theta_\alpha
\end{array}
\right),\\\\
n_\beta=\left(
\begin{array}{c}
\cos \varphi_\beta\\0\\ \sin \varphi_\beta\end{array}
\right),\,
R_\beta=
\left(
\begin{array}{ccc}
\cos \theta_\beta&0&-\sin \theta_\beta\\
\sin \theta_\beta&0&\cos  \theta_\beta\\
0&-1&0
\end{array}
\right),
\end{array}
\end{equation}
with 
\begin{equation}\label{mpd2}
\begin{array}{l}
    \displaystyle{\cos (2\varphi_\alpha)=\frac{2s_2(3s_2-1)+u_\alpha(1+s_1^2-9s_2^2-2s_1s_3+s_3^2)}{2(s_3-s_1)(s_2-u_\alpha)}},\\\\
     \displaystyle{\cos (2\varphi_\beta)=\frac{2s_2(3s_2-1)+u_\beta(1+s_1^2-9s_2^2-2s_1s_3+s_3^2)}{2(s_3-s_1)(s_2-u_\beta)}},
\end{array}
\end{equation}
\begin{equation}    \label{mpd3}
\begin{array}{ll}
\displaystyle{\cos(2 \theta_\alpha) =\frac{u_\alpha(s_3-s_1)+(u_\alpha
-s_2) \cos(2 \varphi_\alpha)}{s_2(1-3 u_\alpha)},\, \cos(2 \theta_\beta) =
\frac{u_\beta(s_1-s_3)+(s_2-u_\beta
) \cos(2 \varphi_\beta)}{s_2(1-3 u_\beta)}.}
\end{array}
\end{equation}
\end{definition}
\begin{remark}\label{remunico}
Observe that, by construction, 
$$
\alpha_-(U_{\alpha},S)=
\frac{u_{\alpha}}{s_2},\quad
\beta_+(U_{\beta},S) = \frac{u_{\beta}}{s_2}.
$$
We also find, see Appendix \ref{AppB}, that
\begin{equation}\label{ref1}
\alpha_+(U_{\alpha},S) = \frac{u_{\alpha}}{s_2},\quad
\beta_-(U_{\beta},S)= \frac{u_{\beta}}{s_2}.
\end{equation}
\end{remark}
We then set 
\begin{equation}\label{alfa-beta}
\alpha:=  \frac{u_{\alpha}}{s_2},
\quad
\beta:=  \frac{u_{\beta}}{s_2}.
\end{equation}
The next proposition concerns two special elements of $\mathcal T^{2}(S)$, namely $U_{\alpha}$ and $U_{\beta}$ from Definition~\ref{def_U}.
\begin{proposition}\label{prop_opt_curve}
Let $U_\alpha, U_\beta, R_\alpha, R_\beta, n_\alpha, n_\beta$ be defined by \eqref{upm}-\eqref{mpd1}.
Then  
\begin{itemize}
\item[(i)]
$ s_1\leq   u_{\alpha}<\frac 1 3< u_{\beta}\leq  s_3$.\\
\item[(ii)]
The matrices $U_\alpha, U_\beta \in \mathcal T^{1}(S)$. Precisely, they are the unique solutions $T$ to \eqref{oldbp} for $\lambda = \alpha$ and $\lambda = \beta$  respectively, i.e., 
\begin{equation}
\label{2791}
 R_\alpha^t  U_{\alpha}R_\alpha = \alpha S +(1-\alpha)\, n_\alpha\otimes n_\alpha,
\end{equation}
\begin{equation}
\label{2792}
 R_\beta^t  U_{\beta}R_\beta = \beta S +(1-\beta)\, n_\beta\otimes n_\beta.
\end{equation}
\end{itemize}
\end{proposition}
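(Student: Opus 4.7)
The plan is to dispatch (i) by evaluating the quadratic $H$ at three strategic points, and to settle (ii) by direct verification of the rank-one identities~\eqref{2791}, \eqref{2792}.

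For (i), I would first compute, using $s_1+s_2+s_3=1$ in the last step,
\[
H(1/3)=\tfrac{1}{3}\bigl(s_1 s_3 - s_2(1-2s_2)\bigr)=\tfrac{1}{3}(s_1-s_2)(s_3-s_2),
\]
which is strictly negative under~\eqref{cS}. Since $H$ has positive leading coefficient, its two roots must be real and straddle $1/3$, giving $u_\alpha<1/3<u_\beta$ at once. I would then evaluate $H(s_1)$ and $H(s_3)$ and look for factorizations in terms of the gaps $s_i-s_j$; verifying that both are nonnegative confines $u_\alpha\geq s_1$ and $u_\beta\leq s_3$, completing (i).

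For (ii), the strategy is a brute-force matrix computation. The explicit form of $R_\alpha$ in~\eqref{mpd1} depends only on the single angle $\theta_\alpha$, so the six independent entries of $R_\alpha^t U_\alpha R_\alpha$ are trigonometric polynomials in $\theta_\alpha$ whose coefficients depend on $u_\alpha$; likewise the entries of $\alpha S+(1-\alpha)n_\alpha\otimes n_\alpha$ depend on $\varphi_\alpha$ and $u_\alpha$ through $n_\alpha$. Matching entries produces a system of trigonometric relations, and the formulas \eqref{mpd2}-\eqref{mpd3} are precisely the solutions obtained by solving for $\cos(2\theta_\alpha)$ and $\cos(2\varphi_\alpha)$. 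The one remaining scalar compatibility condition (forcing the diagonal structure of $U_\alpha$ with its double eigenvalue $u_\alpha$ in the top-left block) reduces exactly to $H(u_\alpha)=0$, which is why $u_\alpha$ was defined as a root of $H$ in the first place. The identity~\eqref{2792} for $U_\beta$ is proved analogously, with the roles of $s_1$ and $s_3$ swapped.

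Uniqueness of $T$ is essentially immediate once $(\alpha,R_\alpha,n_\alpha)$ are fixed, since the equation~\eqref{2791} determines $T$ by conjugation. If instead uniqueness is read in the sense of Lemma~\ref{LemmaF}, requiring $\alpha\in A_\alpha(T,S)$ with the additional $\T^2(S)$ constraint forces $\alpha_-(T,S)=\alpha_+(T,S)=\alpha$, which by Remark~\ref{remunico} collapses $T$ onto the uniaxial line and pins down $t_1=t_2=u_\alpha$, $t_3=1-2u_\alpha$. The main obstacle I foresee is the algebraic bookkeeping for~\eqref{2791}: expanding $R_\alpha^t U_\alpha R_\alpha$ cleanly, matching six symmetric entries against $\alpha S+(1-\alpha)n_\alpha\otimes n_\alpha$, and recognizing that the single leftover compatibility equation is exactly the quadratic $H(u_\alpha)=0$.
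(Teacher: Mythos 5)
Your plan for (i) is a valid alternative: evaluating $H(1/3)$ to get $u_\alpha<\tfrac13<u_\beta$ matches the paper's first step (the paper also checks $H(0)>0$ and $H(1)>0$ to certify both roots lie in $(0,1)$). For $s_1\le u_\alpha$ and $u_\beta\le s_3$, however, the paper does not evaluate $H(s_1)$ and $H(s_3)$; it simply invokes the ordering constraint \eqref{ordering}, i.e., the fact that $U_\alpha,U_\beta\in\T^1(S)$. Your direct route would work (one finds, e.g., $H(s_1)=(s_2-s_1)\bigl(s_1^2-s_1(1+4s_2)+2s_2\bigr)$, and the second factor is positive since $2(1-2s_1)>1-s_1$ when $s_1<\tfrac13$), and it is arguably more self-contained, but you should actually carry out the factorizations rather than just ``look for them.''

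For (ii) your approach diverges substantially from the paper and contains a genuine error. The paper's proof of (ii) is a one-line citation: it follows from Remark \ref{remunico} (that $\alpha_-(U_\alpha,S)=\alpha_+(U_\alpha,S)=u_\alpha/s_2$, so the admissible interval collapses to a point) together with Lemma \ref{LemmaF}, which already gives the rank-one connection and the explicit formulas \eqref{ni} for $n$. Your plan instead is to expand $R_\alpha^t U_\alpha R_\alpha$ and $\alpha S+(1-\alpha)n_\alpha\otimes n_\alpha$ by brute force, and you assert that the ``one remaining scalar compatibility condition reduces exactly to $H(u_\alpha)=0$.'' That is not correct: after you use the trace constraint and the automatic match of the $(2,2)$ entry, the two remaining nontrivial entries give two equations in the two unknowns $\theta_\alpha$, $\varphi_\alpha$. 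The system is solvable over a whole interval of values of $u$ (the requirement is $\alpha\in A(U,S)$, i.e., $u\le s_2/(2s_2+s_3)$, an inequality, not an equality), so the existence of a single rank-one connection \eqref{2791} does \emph{not} force $H(u_\alpha)=0$. The equation $H(u)=0$ is precisely the condition that the uniaxial matrix $\mathrm{diag}(u,u,1-2u)$ lies on the $\T^2(S)$ curve \eqref{condnec1}, i.e., admits \emph{two} rank-one connections along the same $n$ — a strictly stronger requirement than \eqref{2791} alone. So the mechanism you propose for ``discovering'' $H$ in the compatibility check would not actually produce it.

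Finally, on uniqueness: your first reading (``$T$ is determined once $(\alpha,R_\alpha,n_\alpha)$ are fixed'') is vacuous and not what the proposition asserts. Your second reading — that the $\T^2(S)$ constraint forces $\alpha_-(T,S)=\alpha_+(T,S)$, collapsing $T$ to the uniaxial line — is the right one and is essentially what Remark \ref{remunico} encodes, but you should commit to it rather than offering it as an afterthought.
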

\begin{proof}
(i) We have $$H(0)=2s_2^2>0, \quad H(1)=(3 - 2 s_2) s_2 + s_1 s_3>0, \quad H\left(\frac 1 3\right)=-\frac 1 3 (s_1 - s_2) (s_2 - s_3)<0.$$ Therefore, the two roots of $H(x)=0$ satisfy $u_{\alpha}< 1/3< u_{\beta}$.  The other two inequalities follow from \eqref{ordering}.
One checks the formula for $H(\frac 1 3)$ by writing 
$$-\frac 1 3 s_2+\frac 2 3 s_2^2+\frac 1 3 s_1 s_3 =
-\frac 1 3 s_2(s_1+s_2+s_3)+\frac 2 3 s_2^2+\frac 1 3s_1 s_3 
$$ 
and then factoring the expression.

\noindent
(ii) This follows from Remark \ref{remunico} and  Lemma \ref{LemmaF} in the cases $t_1=t_2$ or $t_2=t_3$ for which either $A_\alpha(T,S)$ or $A_\beta(T,S)$ degenerates to a single point ($\alpha$ or $\beta$ respectively).
\end{proof}

The matrices $U_\alpha$ and $U_\beta$  correspond to the blue and orange points in  Figure~\ref{figura2}.
In order to define $\Gamma_\alpha$ and $\Gamma_\beta$ we will need an efficient way to describe curves  in eigenvalue space. 
Consider the one-parameter family of matrices
\begin{align*}
& p\mapsto
p  R_\alpha^t  U_{\alpha}R_\alpha +(1-p) \alpha S,
 \quad p\in[0,1],\\
& p\mapsto
p  R_\beta^t  U_{\beta}R_\beta +(1-p) \beta S, 
 \quad p\in[0,1].
\end{align*}
Normalize to trace one and set
\begin{align}\label{s11}
& M_\alpha(p):=
\eta_\alpha(p) R_\alpha^t  U_{\alpha}R_\alpha +
(1- \eta_\alpha(p)) S, \quad 
\eta_\alpha(p):= \frac{p}{p + (1-p)\alpha},\\
\label{s1177}
& M_\beta(p):= 
\eta_\beta(p) R_\beta^t  U_{\beta}R_\beta +
(1- \eta_\beta(p)) S, \quad 
\eta_\beta(p):= \frac{p}{p + (1-p)\beta}.
\end{align}

\noindent
Denote by 
\begin{align}
\label{ordm1}
& m_1(\alpha,p)\leq m_2(\alpha,p)\leq m_3(\alpha,p), \\
\label{ordm2}
& m_1(\beta,p)\leq m_2(\beta,p)\leq m_3(\beta,p),
\end{align}
the eigenvalues of $M_\alpha(p)$ and $M_\beta(p)$, respectively.
\begin{definition}\label{def33}
The curves $\Gamma_\alpha, \Gamma_\beta$ are defined as follows. Consider the parametric curves associated to \eqref{ordm1}-\eqref{ordm2}:
   \begin{equation}\label{m1m2m3}
   p\mapsto m(\alpha,p)=(m_1(\alpha,p),m_2(\alpha,p),m_3(\alpha,p)),   \quad p\in[0,1],  \end{equation}
       \begin{equation}\label{m1m2m3p}
       p\mapsto m(\beta,p)=(m_1(\beta,p),m_2(\beta,p),m_3(\beta,p)), \quad p\in[0,1].
 \end{equation}
Then
   \begin{equation*}
\Gamma_{\alpha}:=m(\alpha,[0,1]),\quad 
\Gamma_{\beta}:=m(\beta,[0,1]).
  \end{equation*}
\end{definition}
The curves $\Gamma_\alpha, \Gamma_\beta$ are shown in Figure~\ref{fig:sextant-annotated}.
\begin{figure}\label{fig:curves-annotated}
\begin{subfigure}{0.33\textwidth}
\includegraphics[width=\textwidth]{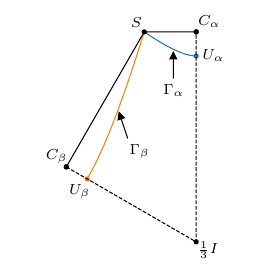}\caption{}
\label{fig:sextant-annotated}
\end{subfigure}%
\begin{subfigure}{0.33\textwidth}
\includegraphics[width=\textwidth]{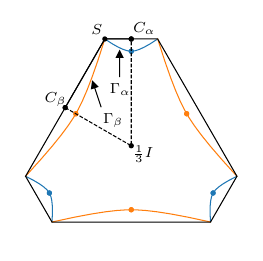}\caption{}\label{fig:hexagon-annotated}
\end{subfigure}%
\begin{subfigure}{0.33\textwidth}
\includegraphics[width=\textwidth]{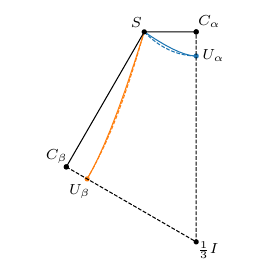}\caption{}\label{fig:sextant-comparison}
\end{subfigure}
\caption{
The left figure shows important fields in one sextant ($s_1\le s_2 \le s_3$) of the unit-trace plane. The outer quadrilateral connects the field $S$ to the isotropic field $\frac{1}{3}I$ and the two uni-axial points, $C_{\alpha}=(\frac{s_1+s_2}{2},\frac{s_1+s_2}{2},s_3)$ and $C_{\beta}=(s_1,\frac{s_2+s_3}{2},\frac{s_2+s_3}{2})$. The curves $\Gamma_{\alpha}$ and $\Gamma_{\beta}$ from Definition~\ref{def33} are also shown, together with their intersections, $U_\alpha$ and $U_\beta$ respectively, with the uniaxial lines. The center figure shows the construction in Definition~\ref{def:Gamma-construction}. The set $\LL$ is enclosed by the union of the reflected copies of $\Gamma_\alpha, \Gamma_\beta$. The right figure compares the curves from Figure~\ref{fig:sextant-annotated} to the set of $\mathcal{T}^{2}(S)$ fields shown in Figure~\ref{fig:T2-locations}.}
\label{figura2}
\end{figure}
\begin{definition}\label{def:Gamma-construction}
The closed curve $\Gamma$ is obtained as follows.
First, reflect $\Gamma_{\alpha}$ along the line $m_1=m_2$ in the plane $m_1+m_2+m_3=1$, then consider the union of the curves obtained with its $2\pi/3$ rotations within the unit trace plane.
Next, reflect $\Gamma_{\beta}$ along the line $m_2=m_3$ in the plane $m_1+m_2+m_3=1$, and consider the union of the curves obtained with its $2\pi/3$ rotations within the unit trace plane.
Finally, $\Gamma$ is the union of the six curves thus defined.  
\end{definition}
\begin{definition}\label{matL}
    We denote by $\LL$ the bounded closed set enclosed by $\Gamma$ in the unit trace plane (see  Figure~\ref{fig:hexagon-annotated}).
\end{definition}

\begin{theorem}
 The set $\LL$ of Definition \ref{matL} satisfies 
 $\LL\subset K^*(S)$.
\end{theorem}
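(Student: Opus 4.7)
The plan splits naturally into two stages: first establish the inclusion on the boundary curve $\Gamma$, then fill in the interior of $\L$ using rank-one laminations between boundary points.

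\textbf{Stage 1 (Boundary).} By Proposition~\ref{prop_opt_curve}, the uniaxial matrices $U_\alpha$ and $U_\beta$ admit the explicit rank-one connections
\begin{equation*}
R_\alpha^t U_\alpha R_\alpha = \alpha S + (1-\alpha)\, n_\alpha\otimes n_\alpha,
\qquad
R_\beta^t U_\beta R_\beta = \beta S + (1-\beta)\, n_\beta\otimes n_\beta,
\end{equation*}
so they belong to $\T^2(S)$ in the uniaxial limiting sense. Applying Corollary~\ref{quasi-final} (whose proof covers the uniaxial case by closure), every matrix on the segments $[\alpha S, R_\alpha^t U_\alpha R_\alpha]$ and $[\beta S, R_\beta^t U_\beta R_\beta]$ is, after trace normalization and conjugation by a suitable $R \in SO(3)$, an element of $K^*(S)$. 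By definitions \eqref{s11}--\eqref{s1177} of $M_\alpha(p)$ and $M_\beta(p)$, the ordered eigenvalues of these normalized matrices are precisely the parametric curves $\Gamma_\alpha$ and $\Gamma_\beta$. Hence $\Gamma_\alpha \cup \Gamma_\beta \subset K^*(S)$. Since $K^{qc}(S)$ is invariant under $SO(3)$-conjugation and signed permutation matrices belong to $SO(3)$, $K^*(S)$ is closed under permutations of eigenvalue triples. The six arcs composing $\Gamma$ (obtained from $\Gamma_\alpha, \Gamma_\beta$ by reflection across the uniaxial lines and $2\pi/3$ rotations within the unit-trace plane) correspond exactly to such eigenvalue permutations. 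Thus $\Gamma \subset K^*(S)$.

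\textbf{Stage 2 (Interior).} For each point $A$ in the interior of $\L$ I would construct a rank-one lamination of symmetric matrices whose barycentre is a rotated representative of $A$ and whose endpoints have eigenvalues lying on $\Gamma$. More precisely, I would seek $B_1, B_2 \in \mathbb{M}^{3\times 3}_{\rm sym}$ whose ordered eigenvalues lie on $\Gamma$, $p \in (0,1)$, and $R \in SO(3)$ so that $B_1 - B_2$ has rank one and $\tfrac{1}{\tr A}\, R^t (p B_1 + (1-p) B_2) R = A$. Given Stage 1, $B_1, B_2$ can be taken in $K^{qc}(S)$, and closure of $K^{qc}(S)$ under rank-one averaging then gives $A \in K^{qc}(S)$; membership in $K^*(S)$ follows once we check that $A$ has the correct diagonal structure and eigenvalue range. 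Natural candidates for $B_1, B_2$ are pairs of boundary matrices with permuted eigenvalue triples, joined by chords sweeping across adjacent sextants; at the uniaxial boundary points $U_\alpha, U_\beta$ the rank-one direction degenerates into a one-parameter family, providing the flexibility needed to transition across the uniaxial lines.

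\textbf{Main obstacle.} The hard part is the explicit construction in Stage 2: for each $A \in \L \setminus \Gamma$, producing boundary matrices $B_1, B_2$ and rotations making $B_1 - B_2$ rank-one while realising the prescribed barycentre is a nonlinear compatibility problem between eigenvalue configurations on $\Gamma$ and the rotational degrees of freedom in $SO(3) \times SO(3)$. Verifying that the resulting trajectories cover the entire interior of $\L$ without gaps --- especially near the corners of $\L$ and at the isotropic centre $\tfrac{1}{3} I$ --- is where the bulk of the computational work will lie.
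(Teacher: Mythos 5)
Stage 1 of your plan is essentially the paper's argument: Corollary~\ref{quasi-final} applied to the uniaxial points $U_\alpha, U_\beta$ (with the limiting/closure case handled as in the corollary's proof), together with the permutation-and-reflection symmetry of $K^*(S)$, gives $\Gamma\subset K^*(S)$. That part is sound.

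Stage 2 contains the real gap. You describe the interior step as a hard nonlinear compatibility problem in $SO(3)\times SO(3)$, and you leave the construction open. But no such tuning of rotations is needed, because the paper uses a much more elementary device, the ``straight-line attainability'' observation of Nesi--Milton: if $P_1=(p_1,p_2,p_3)$ and $P_2=(q_1,q_2,q_3)$ are any two points in the unit-trace plane that lie on a common line through $(0,0,1)$ (or a permutation thereof), then $(p_1,p_2)=t\,(q_1,q_2)$ for some scalar $t$, hence the diagonal matrices satisfy
\begin{equation*}
\diag(p_1,p_2,p_3) - t\,\diag(q_1,q_2,q_3) = (p_3-t q_3)\, e_3\otimes e_3,
\end{equation*}
which is rank one (with no rotations at all). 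Since $K^{qc}(S)$ is a cone and is closed under rank-one averaging, and both endpoints already lie in $K^*(S)$ after Stage 1, the whole segment $[P_1,P_2]$, renormalised to unit trace, lies in $K^*(S)$. The paper then takes, for each interior point $P$ of $\L$, the line through $P$ and $(0,0,1)$ (or its appropriate permutation), lets $P_1,P_2$ be its intersections with $\partial\L=\Gamma$, and concludes $P\in K^*(S)$. Your plan would work if completed, but as written it does not identify this one simple idea, and without it Stage 2 is not a proof. You should replace the vague search over boundary pairs and rotations by this explicit choice of laminating direction, and you should also verify (or cite the convexity of $\L$ along these pencils of lines, which follows from the convex bound and the shape of $\Gamma$) that every interior point indeed lies between two boundary points on such a line.
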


\begin{proof}
This is a consequence of Corollary \ref{quasi-final} and 
the fact that each point $P$ in the interior of $\LL$ 
lies on a segment that connects two points $P_1$ and $P_2$ on the boundary of $\LL$ and that is the projection on $K^*(S)$ 
of a rank-one segment in matrix space. The latter property is 
referred to as the straight line attainability property in 
\cite{NM}. Specifically, given an internal point $P$, take the line through $P$ and $(0,0,1)$ (or one of its permutations).
Since $P$ is an internal point, one can find points of intersection of such line with the boundary of $\LL$, say $P_1$ and $P_2$, 
of which $P$ is a convex combination. Then it is easy to see that there exists $\lambda$ such that $P_1$ and $\lambda P_2$ are rank-one connected, which implies that the whole segment connecting $P_1$ and $P_2$ is contained in $K^*(S)$. 
\end{proof}

\subsection{Comparison with previously known inner bound}
\label{vecchio}
In the present section we compare our results with those that were known prior to the present work.  
Nesi $\&$ Milton \cite{NM} established the existence of a non-trivial subset of $K^{*}(S)$. We denote  it by $\mathcal L_{MN}(S)$.  
The set $\mathcal L_{MN}(S)$ is a non-convex polygon. Like $\LL$, 
it is formed by six sets, each one obtained from another by an appropriate permutation of the eigenvalues. 
Therefore it suffices to define its restriction to one of the sextants formed by the uniaxial axes. We choose the upper left sextant, see Figure~\ref{figura3-c}.
\begin{definition}\label{defLMN}
The restriction of $\mathcal L_{MN}(S)$ to the sextant with one vertex at $S$ is the quadrilateral of vertices
$$
(s_1,s_2,s_3),\,V_\alpha,
\left(\frac 13,\frac 13,\frac 13\right),\,V_\beta
$$
with
\begin{equation*}
    V_\alpha=(v_\alpha,v_\alpha, 1-2v_\alpha),\,\,
    V_\beta=(1-2v_\beta, v_\beta,v_\beta),\,\,
    \displaystyle{v_\alpha=\frac{s_2}{2s_2+s_3}},\,\,
    \displaystyle{v_\beta=\frac{s_2}{2s_2+s_1}}.
\end{equation*}
\end{definition}

\begin{remark}
   The set $\mathcal L_{MN}(S)$ is the grey set in each image of Figures~\ref{figura3-a} and \ref{figura3-b}. 
   We check in Appendix \ref{AppB} that $v_\alpha$ and $v_\beta$ 
   satisfy 
\begin{equation}
\label{inequalities}
   \begin{array}{l}
\displaystyle{
H(v_\alpha)=-
   \frac{s_2 s_3 (s_3-s_2)(s_2-s_1)
   }{(2 s_2 + s_3)^2}
   }<0
   \\\\
 \displaystyle{
 H(v_\beta)=-
   \frac{s_1 s_2 (s_3-s_2)(s_2-s_1)}
   {(s_1 + 2 s_2)^2}
   }<0,
 \end{array}
\end{equation}
 where $H$ is defined by \eqref{H}, and that \eqref{inequalities} implies
 \begin{equation}\label{betteruni}
u_\alpha<v_\alpha<v_\beta<u_\beta .
 \end{equation}
\end{remark}
\begin{figure}
\centering
\begin{subfigure}{0.33\textwidth}
\includegraphics[width=\textwidth]{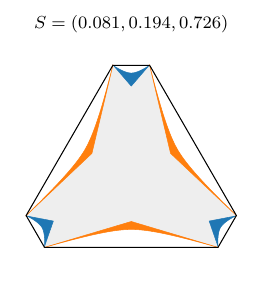}
\caption{}
\label{figura3-a}
\end{subfigure}%
\begin{subfigure}{0.33\textwidth}
\includegraphics[width=\textwidth]{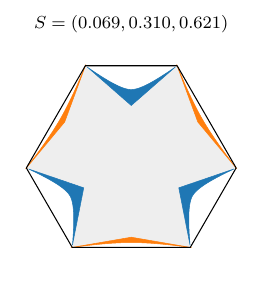}
\caption{}
\label{figura3-b}
\end{subfigure}%
\begin{subfigure}{0.33\textwidth}
\includegraphics[width=\textwidth]{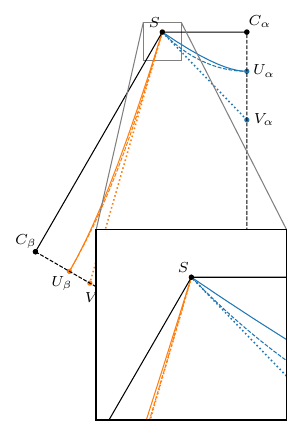}
\caption{}\label{figura3-c}
\end{subfigure}
\caption{The left two figures show comparisons between Figure~\ref{fig:hexagon-annotated}
with the constructions in~\cite{NM}. The shaded gray region is formed by the procedure described in~\cite{NM}. 
The blue and orange regions are additional fields found by the constructions in the present paper. They are new. The right figure shows the comparison between $\Gamma$ (solid lines), $\T^2(S)$ (dashed lines) and the boundary of $\mathcal L_{MN}(S)$ (dotted lines).
}
\label{fig:T2-envelope-with-NM-constructions}
\end{figure}

\begin{proposition}\label{MN_prop}
If \eqref{cS} holds, i.e., $S$ in not uniaxial, we have $\mathcal L_{MN}(S)\subset \mathcal L(S)$. 
\end{proposition}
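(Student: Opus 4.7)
The plan is to work sextant by sextant, using the $S_3$-symmetry of both $\mathcal L_{MN}(S)$ and $\L$. It is enough to verify the inclusion in the sextant of the unit-trace plane that contains $S$. In that sextant, $\mathcal L_{MN}(S)$ reduces to the quadrilateral $Q$ with vertices $S$, $V_\alpha$, $\tfrac{1}{3}I$, $V_\beta$ of Definition \ref{defLMN}, while $\L$ is the region bounded by the curves $\Gamma_\alpha$, $\Gamma_\beta$, and the two uniaxial segments $[U_\alpha,\tfrac{1}{3}I]$ and $[\tfrac{1}{3}I,U_\beta]$.

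First, I would check that all four vertices of $Q$ lie in $\L$: the points $S$ and $\tfrac{1}{3}I$ trivially, and the inequality \eqref{betteruni} places $V_\alpha$ on the uniaxial segment $[U_\alpha,\tfrac{1}{3}I]$ and $V_\beta$ on $[\tfrac{1}{3}I,U_\beta]$, both of which belong to $\partial\L$. The same inequality yields immediately that the two uniaxial sides $[V_\alpha,\tfrac{1}{3}I]$ and $[\tfrac{1}{3}I,V_\beta]$ of $Q$ are contained in $\partial\L\subset\L$.

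The crux is to show that the two non-uniaxial sides $[S,V_\alpha]$ and $[S,V_\beta]$ of $Q$ lie in $\L$. By symmetry it suffices to treat $[S,V_\alpha]$. The idea is to prove that the arc $\Gamma_\alpha$ from $S$ to $U_\alpha$ lies on the side of the chord $[S,V_\alpha]$ opposite to $\tfrac{1}{3}I$, so that the chord itself lies in the region bounded by $\Gamma_\alpha$ and the uniaxial axis. Equivalently, after parametrizing both curves by a common coordinate (for instance $m_2$), the point of $\Gamma_\alpha$ should be ``further from $\tfrac{1}{3}I$'' than the corresponding point on $[S,V_\alpha]$. Since $R_\alpha^t U_\alpha R_\alpha=\alpha S+(1-\alpha)\,n_\alpha\otimes n_\alpha$ is a rank-one update of $\alpha S$, the eigenvalues of $M_\alpha(p)$ in \eqref{s11} can be computed from the secular equation for a rank-one perturbation of a diagonal matrix and compared with the explicit linear parametrization of $[S,V_\alpha]$ encoded by $v_\alpha=s_2/(2s_2+s_3)$.

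Once $\partial Q\subset\L$ is in place and $\Gamma_\alpha,\Gamma_\beta$ have been shown not to cross the interior of the convex set $Q$, a Jordan-curve argument closes the proof: $\L$ is a closed topological disk bounded by the simple closed curve $\Gamma$, so the convex set $Q$ cannot straddle $\Gamma$ and must be contained in $\L$. The main obstacle will be the algebraic comparison described above. Although Lemma \ref{LemmaF} and Proposition \ref{prop_opt_curve} supply all the needed ingredients, turning the qualitative statement ``$\Gamma_\alpha$ stays outside $[S,V_\alpha]$'' into a clean inequality will require organizing the computation around the defining quadratic $H$ of \eqref{H}, whose roots are $u_\alpha,u_\beta$, and using the explicit expression for $v_\alpha$ to relate the MN and $\mathcal T^2(S)$ constructions.
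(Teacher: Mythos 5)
Your structural plan is essentially the same as the paper's: reduce to one sextant, observe (from~\eqref{betteruni}) that the uniaxial vertices $V_\alpha, V_\beta$ of the quadrilateral lie strictly between $U_\alpha, U_\beta$ and $\tfrac13 I$ on the uniaxial axes, and then reduce the whole matter to showing that $\Gamma_\alpha$ (resp.\ $\Gamma_\beta$) stays on the far side of the chord $[S,V_\alpha]$ (resp.\ $[S,V_\beta]$). You also correctly identify the lever for the comparison: since $(n_\alpha)_2=0$, the middle eigenvalue along $\Gamma_\alpha$ can be used as a common parameter. This is exactly what the published proof exploits.

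However, the proposal stops where the proof actually begins: the assertion that $\Gamma_\alpha$ does not cross $[S,V_\alpha]$ is stated as a goal, with a suggested tool (the secular equation of a rank-one update of $\alpha S$), but no inequality is derived. The paper's proof is more concrete and does not need the full secular equation. Suppose by contradiction that, for some $t_1,t_2\in(0,1)$, the two matrices $(1-t_1)S+t_1V_\alpha$ and $(1-t_2)S+t_2(\alpha S+(1-\alpha)n_\alpha\otimes n_\alpha)$ have the same eigenvalues. Because the second coordinate of $n_\alpha$ vanishes, both matrices are block-diagonal in the $e_2$ direction; matching the $(2,2)$-entries determines $t_2$ linearly in $t_1$, and the remaining comparison is simply an equality of the determinants of the two $2\times 2$ blocks. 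That equation factors as $t_1(s_2-s_1)f_1f_2=0$, where $f_1=0$ forces $t_1>1$ (impossible) and $f_2=0$ forces $u_\alpha>v_\alpha$, contradicting~\eqref{betteruni}. The same computation with indices swapped handles $\Gamma_\beta$. Without carrying out some version of this algebra, your argument has a genuine gap: the Jordan-curve conclusion you invoke is only available once the non-crossing of $\Gamma_\alpha$ and $[S,V_\alpha]$ has actually been established.

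One further small point: when you claim $[V_\alpha,\tfrac13 I]\subset\partial\L$, this is not literally true --- the boundary $\Gamma$ meets the uniaxial line only at $U_\alpha$, and $[U_\alpha,\tfrac13 I]$ lies in the interior of $\L$ by the reflection symmetry of $\Gamma$ across $m_1=m_2$. This does not affect the conclusion (the segment is still in $\L$), but it is worth stating correctly, since the subsequent ``$Q$ cannot straddle $\Gamma$'' step relies on knowing which parts of $\partial Q$ touch $\partial\L$ and which lie in $\L^\circ$.
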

\begin{proof}
The blue and orange triangle-like sets 
in  Figures \ref{figura3-a} and \ref{figura3-b}
are those where our construction does better than the previous one. 
Let us focus on the upper-left sextant. 
We will show that $\Gamma_\alpha$, which starts from $S$ and ends at $U_\alpha$, stays above the segment 
starting at $S$ and ending at $V_\alpha$. We denote the latter by $B_\alpha$ (the dotted line in Figure \ref{figura3-c}).
  Taking into account \eqref{betteruni} and the convex bound, it is enough to prove that
  the curves $\Gamma_\alpha$ and $B_\alpha$ intersect only at the point $S$. 
 Recalling \eqref{2791} and \eqref{s11}, assume on the contrary that for some $p_1,p_2\in(0,1)$ the matrices
\begin{equation}\label{1-3}
   A_1:= (1-p_1) S+ p_1 V_\alpha, \quad
    A_2:=(1-p_2)S+ p_2 (\alpha S+(1-\alpha)n_\alpha\otimes n_\alpha),
\end{equation}
share the same eigenvalues. 
Notice that the vector $e_2=(0,1,0)$ is a common eigenvector of the matrices $A_1$ and $A_2$, since $(n_\alpha)_2=0$. We start by imposing that the corresponding  eigenvalues  are  the same, which happens if and only if
\begin{equation*}
    (1-p_1) s_2+ p_1 v_\alpha=(1-p_2)s_2+ p_2 \alpha s_2,
\end{equation*}
for some $p_1,p_2\in(0,1)$,
  which gives
  \begin{equation}\label{t1t2}
    p_2= p_1\frac{s_2 (s_2-s_1 )}{(2 s_2 + s_3) (s_2 - u_\alpha)}=p_1v_\alpha \frac{s_2-s_1}{s_2 - u_\alpha}=p_1\frac{v_\alpha}{s_2}  \frac{s_2-s_1}{1- \alpha}.
\end{equation}
Now, consider the remaining $2\times 2$ block in \eqref{1-3}, obtained by removing the second row and column of the matrices $A_1$ and $A_2$. These are symmetric matrices. By definition, ${\rm Tr} A_1={\rm Tr} A_2=1$. Since by \eqref{t1t2} the eigenvalue corresponding to the 
 common eigenvector $e_2$ is the same for $A_1$ and $A_2$, the trace of the new $2\times 2$ matrices is also the same.
Setting
\begin{equation}\label{la}
L_\alpha:=1-p_2(1-\alpha),
\end{equation} 
we impose that their determinants are equal. This amounts to 
\begin{equation}\label{new1}
   ((1-p_1)s_1+ p_1 v_\alpha)((1-p_1)s_3+ p_1 (1-2v_\alpha))- 
   \Big(
   L_\alpha^2 s_1 s_3+L_\alpha(1-L_\alpha)\frac{\alpha^2 s_1 s_3-u_\alpha(1-2u_\alpha)}{\alpha(1-\alpha)} 
   \Big)= 0.
\end{equation} 
Using  \eqref{t1t2}, formula \eqref{la} can be written as
\begin{equation}\label{penultima}
 L_\alpha=\frac{f_1}{2s_2+s_3}, 
\end{equation}
where  $f_1 :=p_1(s_1-s_2) +(2s_2+s_3)$.
On the other hand, by \eqref{penultima} and the formula for $v_\alpha$ in Definition \ref{defLMN}, one has
\begin{equation}\label{Lf}
(1-p_1)s_3+ p_1 (1-2v_\alpha)= s_3 \frac{f_1}{2s_2+s_3}=s_3 L_\alpha.
\end{equation}
Thus \eqref{new1} is equivalent to
\begin{equation}\label{new2}
L_\alpha\Big(s_3  \big((1-p_1)s_1+ p_1 v_\alpha\big) -
L_\alpha s_1 s_3 - (1-L_\alpha)B\Big)=0,
\end{equation}
where 
\begin{equation}\label{BBB}
B:=\frac{\alpha^2 s_1 s_3-u_\alpha(1-2u_\alpha)}{\alpha(1-\alpha)}.
\end{equation}
We claim that \eqref{new2} is equivalent to
$f_2=0$, 
where
\begin{equation}\label{new4} 
f_2 := (2 s_2^2 + 2 s_1 s_3 + s_2 s_3 + s_3^2)\alpha - (s_2 + s_3). 
\end{equation}
Postponing to Appendix \ref{AppC} the proof of the above claim, 
we proceed and complete the proof.
Solving $f_2= 0$ in \eqref{new4} for $\alpha$, and recalling that 
$u_\alpha=\alpha s_2$, see \eqref{alfa-beta}, 
yields
$$
u_\alpha=\frac{s_2 (s_2 + s_3)}{2 s_2^2 + 2 s_1 s_3 + s_2 s_3 + s_3^2}.
$$
A simple calculation shows that 
$$u_\alpha-v_\alpha =\frac{2 s_2 s_3 (s_2 - s_1) }{(s_3+2s_2)(2 s_2^2 + 2 s_1 s_3 + s_2 s_3 + s_3^2)}>0,$$
which contradicts \eqref{betteruni}. 
The argument for $\Gamma_\beta$ is similar and is omitted.
\end{proof}

\begin{remark}\label{stability}
In fact one can prove that the set $\LL$ is stable under lamination, unlike the set enclosed by $\T^2(S)$ or the set $\mathcal L_{MN}(S)$. A precise stability theorem will be given in a forthcoming paper. 
\end{remark}

\section{Proofs of  Lemma \ref{LemmaF}  and  Proposition \ref{prop5.2} }
\subsection{Proof of Lemma \ref{LemmaF}}\label{dimoLemma} 
We have that \eqref{oldbp} is verified if and only if the matrices $\lambda S+(1-\lambda) n\otimes n$ and  $T$ share the same eigenvalues, i.e.,  if and only if the following equivalence holds:
\begin{equation}
\det (\la S + (1-\la) n \otimes n - z I ) = 0 \Leftrightarrow z = t_j,\quad
j=1,2,3~.
\end{equation}
Preliminarly, we observe that, if \eqref{oldbp} holds then $\la>0$. Indeed, If $\la<0$, the matrix $\la S+(1-\la)n\otimes n$ must have at least two negative eigenvalues, contradicting the fact that $T$ is positive definite. Moreover $\la \neq 1$ (see Remark \ref{noninteressante}). Thus, without loss of generality, we shall assume that $\la\in(0,1)\cup(1,+\infty)$.
 
In addition, to begin with, we will assume that
$
t_j \neq \la s_i~, \,i,j=1,2,3
$,
i.e.,  
\begin{equation}\label{ns}
z\neq
\la s_i~,\, i=1,2,3.
\end{equation}
The remaining cases  follow by a continuity argument.
Thus,
\begin{equation}\label{cp}
\begin{array}{cc}
\det ( \la S + (1-\la) n \otimes n - z I ) = \det(\la S - z I) \det ( I +
(1-\la) (\la S - z I)^{-1} n\otimes n) 
\\\\
= \det(\la S - z I)\displaystyle{\left(1 +  (1-\lambda)\sum_{i=1}^{3} \frac{n_i^2}{\la s_i - z}\right)}~.
\end{array}\end{equation}
Assuming  \eqref{ns} and  using \eqref{cp} we see that 
  \begin{equation*}
\det (\la S + (1-\la) n \otimes n - z I ) = 0 \Leftrightarrow 1 +  (1-\la)\sum_{i=1}^{3} \frac{n_i^2}{\la
s_i - z} =0~.
\end{equation*}
 Allow $z$ to be complex and define 
\begin{equation}\label{ns2}
B(z) = 1 - (1 -\la) 
\sum_{i=1}^{3} \frac{n_i^2}{z - \la s_i}~.
\end{equation}
Clearly, $B(z)$ is the ratio of two monic polynomials of third degree, i.e., it has the
form
\begin{equation*} 
\frac{\prod_{i=1}^{3} (z -z_i)}{\prod_{i=1}^{3} (z -\la s_i)}.
\end{equation*}
Moreover, $B(z)$ has to vanish when $z = t_i~, i=1,2,3$. Thus, in fact, $B$
has the form
\begin{equation}\label{ns3}
B(z) = \frac{\prod_{i=1}^{3} (z -t_i)}{\prod_{i=1}^{3} (z -\la s_i)}.
\end{equation}
The strategy is to compute the residue of $B(z)$ at the points $z = \la s_i, i=1,2,3$
using the two expressions \eqref{ns2},  \eqref{ns3} for $B(z)$.  We then find that the 
matrices $\la S + (1-\la) n \otimes n$ and $T$ have
 the same eigenvalues provided the 
relations \eqref{ni}  are satisfied. 
From  \eqref{ns2} it follows that 
\begin{equation*}
{\rm Res\,} B(\lambda\,s_k)=\lim_{z\to \lambda\,s_k}(z- \lambda\,s_k)B(z)=
-(1-\lambda) n_k^2, \quad k=1,2,3.
\end{equation*}
On the other hand, using  \eqref{ns3} we find
\begin{equation*}
{\rm Res\,} B(\lambda\,s_k)=\lim_{z\to \lambda\,s_k}(z- \lambda\,s_k)B(z)
\end{equation*}
\begin{equation*}
= \lim_{z\to \lambda\,s_k} \frac{\prod_{i=1}^{3} (z -t_i)}{\prod_{i\neq k} (z -\la s_i)}=
\frac{\prod_{i=1}^{3} (\lambda\,s_k -t_i)}{\lambda^2 (s_k -s_p)(s_k -s_q)},\quad p\neq k\neq q. 
\end{equation*}
It follows
\begin{equation*}
n_k^2=-\frac{\prod_{i=1}^{3} (\lambda\,s_k -t_i)}{(1-\lambda)\lambda^2 (s_k -s_p)(s_k -s_q)}=
\frac{\prod_{i=1}^{3} (t_i-\lambda\,s_k )}{(1-\lambda)\lambda^2 (s_k -s_p)(s_k -s_q)}.
\end{equation*}
Thus, \eqref{ni} follows.
For given $t_i$ and $s_i$ the only free parameter is $\lambda\notin \{0,1\}$. A solution to our problem exists if and only if
 we may select $\lambda \in(0,1)\cup(1,+\infty)$ in such a way that
 the $n^2_i,~i=1,2,3$ are nonnegative (since the corresponding $n_i$ must be real)
and sum up to one. Indeed, $1={\rm Tr} (R^t TR) =\lambda {\rm Tr}S+ (1-\la)\sum n_i^2= \la +(1-\la)\sum n_i^2$, and $\la\ne 1$, so we have  $\sum n_i^2=1$.
Therefore, it suffices to find $\la$ such that 
\begin{equation}
\hbox{either $\lambda\in(0,1)$  and}
\begin{array}{ccc}
n_i^2\geq 0\Longleftrightarrow \
\left\{
\begin{array}{ccc}
a_1(\lambda):=(t_1 -\la s_1)(t_2 -\la s_1)(t_3 -\la s_1)\geq 0\\\\
a_2(\lambda):=(t_1 -\la s_2)(t_2 -\la s_2)(t_3 -\la s_2)\leq 0\\\\
a_3(\lambda):=(t_1 -\la s_3)(t_2 -\la s_3)(t_3 -\la s_3)\geq 0,
\end{array}
\right.
\end{array}
\end{equation}
 \begin{equation}
\hbox{or $\lambda>1$  and}
\begin{array}{ccc}
n_i^2\geq 0\Longleftrightarrow \
\left\{
\begin{array}{ccc}
a_1(\lambda):=(t_1 -\la s_1)(t_2 -\la s_1)(t_3 -\la s_1)\leq 0\\\\
a_2(\lambda):=(t_1 -\la s_2)(t_2 -\la s_2)(t_3 -\la s_2)\geq 0\\\\
a_3(\lambda):=(t_1 -\la s_3)(t_2 -\la s_3)(t_3 -\la s_3)\leq 0.
\end{array}
\right.
\end{array}
\end{equation}

Note that  the three roots $\frac{t_j}{s_k}>0$ of  each $a_i$ are positive.   We start treating the case $\lambda\in (0,1)$, the case $\la>1$ is similar and omitted.
We take a close look to the third order polynomials $a_i(\lambda)$ and observe that $a_i(0)>0, i=1,2,3$.
Thus,
\begin{equation}
\begin{array}{ccc}
a_1(\lambda)\geq0, \quad\lambda \in(0,1) &\Longleftrightarrow &\lambda\in\left( \left(-\infty,\frac{t_1}{s_1}\right]\bigcup  \left[\frac{t_2}{s_1},\frac{t_3}{s_1}\right]\right)\cap (0,1),\\\\
a_2(\lambda)\leq 0,\quad\lambda \in(0,1), &\Longleftrightarrow & \lambda\in\left( \left[\frac{t_1}{s_2},\frac{t_2}{s_2}\right]\bigcup  \left[\frac{t_3}{s_2},+\infty\right)\right)\cap (0,1),\\\\
a_3(\lambda)\geq0,\quad\lambda \in(0,1) &\Longleftrightarrow &\lambda\in\left( \left(-\infty,\frac{t_1}{s_3}\right]\bigcup  \left[\frac{t_2}{s_3},\frac{t_3}{s_3}\right]\right)\cap (0,1).\
\end{array}
\end{equation}
Since, $\frac{t_1}{s_1}\geq 1$, the first inequality is satisfied for any $\la\in(0,1)$.
\
The second inequality implies $\lambda\geq t_1/s_2$, and since $s_2<s_3$, we have $t_1/s_3<t_1/s_2$ and $t_3/s_2>t_3/s_3$. Hence, the second and third inequalities hold when
$$\lambda\in\left[\frac{t_2}{s_3},\frac{t_3}{s_3}\right]\cap\left[\frac{t_1}{s_2},\frac{t_2}{s_2}\right].$$
 Therefore, the set of admissible $\lambda\in(0,1)$ satisfy the condition
$$
\max \left(\frac{t_1}{s_2},\frac{t_2}{s_3}\right)\leq\lambda\leq \min\left(\frac{t_2}{s_2},\frac{t_3}{s_3}\right)=\min\left(\frac{t_1}{s_1},\frac{t_2}{s_2},\frac{t_3}{s_3}\right).
$$
\noindent
When $\lambda >1$, we get
$$
\max \left(\frac{t_1}{s_1},\frac{t_2}{s_2},\frac{t_3}{s_3}\right)=\max \left(\frac{t_1}{s_1},\frac{t_2}{s_2}\right)\leq\lambda\leq \min\left(\frac{t_2}{s_1},\frac{t_3}{s_2}\right).
$$
It remains to remove the assumption $t_j\neq\lambda \,s_k$. Indeed, when $t_j=\lambda \,s_k$ for some $j$ and $k$, the formulae \eqref{ni} for $n_i^2$ continue to hold with some $n_i=0$. Then, the problem effectively becomes two-dimensional and it is treated similarly.
Finally, \eqref{casoT=S} follows from the assumption that $S$ be not uniaxial implying  $\alpha_-(S,S)<1$.
%
%
%
\subsection{Proof of Proposition \ref{prop5.2}}\label{dimopropoT2}
We prove the case when \eqref{condnec1} holds. The case when \eqref{condnec2} holds follows along similar lines.
Let
\begin{equation}\label{um}
\begin{array}{lll}
n=\left(
\begin{array}{c}
\cos \varphi\\0\\\sin \varphi\end{array}
\right),&
R_1=
\left(
\begin{array}{ccc}
0&-1&0\\
-\cos \theta_1&0&\sin \theta_1\\
-\sin \theta_1&0&-\cos  \theta_1
\end{array}
\right),&
R_2=
\left(
\begin{array}{ccc}
\cos \theta_2&0&-\sin \theta_2\\
0&1&0\\
\sin \theta_2&0&\cos \theta_2
\end{array}
\right),
\end{array}
\end{equation}
for some suitably chosen angles $\varphi, \theta_1, \theta_2$. 
Then we have
\begin{equation}\label{as}
\lambda_1\,S+(1-\lambda_1)\,n\otimes n=
\left(
\begin{array}{ccc}
\lambda_1\,s_1+(1-\lambda_1) \cos^2  \varphi&0&(1-\lambda_1) \cos  \varphi\,\sin \varphi\\
0&\lambda_1\,s_2&0\\
(1-\lambda_1) \cos  \varphi\,\sin \varphi&0&\lambda_1\,s_3+(1-\lambda_1) \sin^2  \varphi
\end{array}
\right).
\end{equation}
On the other hand,
\begin{equation}\label{Q-}
 R_1^t T  R_1=
\left(
\begin{array}{ccc}
t_2\,\cos^2 \theta_1+t_3\,\sin^2 \theta_1&0&(t_3-t_2)\,\cos \theta_1\,\sin \theta_1\\
0&t_1&0\\
(t_3-t_2)\,\cos \theta_1\,\sin \theta_1&0&t_2\,\sin^2 \theta_1+t_3\,\cos^2 \theta_1
\end{array}
\right).
\end{equation}
Recalling that \(\lambda_1\,s_2=t_1\),  we get
\begin{equation}
\lambda_1\,S+(1-\lambda_1)\,n\otimes n=R_1^t TR_1
\end{equation} if and only if
\begin{equation}\label{Rone}
\begin{array}{r}
\left(
\begin{array}{rrr}
t_2\,\cos^2 \theta_1+t_3\,\sin^2 \theta_1&(t_3-t_2)\,\cos \theta_1\,\sin \theta_1\\
(t_3-t_2)\,\cos \theta_1\,\sin \theta_1&t_2\,\sin^2 \theta_1+t_3\,\cos^2 \theta_1
\end{array}
\right)\\\\=
\left(
\begin{array}{ccc}
\lambda_1\,s_1+(1-\lambda_1) \cos^2  \varphi&(1-\lambda_1) \cos  \varphi\,\sin \varphi\\
(1-\lambda_1) \cos  \varphi\,\sin \varphi&\lambda_1\,s_3+(1-\lambda_1) \sin^2  \varphi
\end{array}
\right).\\\\
\end{array}
\end{equation}
The two matrices in \eqref{Rone} are symmetric and, by construction, their common trace equals $s_1+s_3$. Hence, they have the same eigenvalues if and only if their determinants are the same, i.e., if and only if
\begin{equation}\label{z-}
\lambda_1^2 s_1\,s_3+\lambda_1\,(1-\lambda_1)(s_3\,\cos^2 \varphi+ s_1\,\sin^2 \varphi)= t_2\,t_3.
\end{equation}
Setting 
\begin{equation}\label{zetaphi}
z(\varphi):=s_3\,\cos^2 \varphi+s_1\,\sin^2 \varphi,
\end{equation}
we write \eqref{z-} as
\begin{equation}\label{z1}
z(\varphi)=z_1(T,S):=\frac{ t_2\,t_3-\lambda_1^2\,s_1\,s_3}{\lambda_1\,(1-\lambda_1)}=\frac{ s_2^2\, t_2\, t_3-s_1 s_3\, t_1^2 }{(s_2 - t_1)\, t_1}.\end{equation}
The same calculation for the second index yields
\begin{equation}
R_2^t TR_2=
\begin{array}{lll}
\left(
\begin{array}{ccc}
t_1\,\cos^2 \theta_2+t_3\,\sin^2 \theta_2&0&(t_3-t_1)\,\cos \theta_2\,\sin \theta_2\\
0&t_2&0\\
(t_3-t_1)\,\cos \theta_2\,\sin \theta_2&0&t_3\,\cos^2 \theta_2+t_1\,\sin^2 \theta_2
\end{array}
\right),
\end{array}
\end{equation}
and recalling that \(\lambda_2\,s_2=t_2\),  we get
\begin{equation}
\lambda_2\,S+(1-\lambda_2)\,n\otimes n=R_2^t TR_2
\end{equation} if and only if
\begin{equation*}
\begin{array}{r}
\left(
\begin{array}{ccc}
t_1\,\cos^2 \theta_2+t_3\,\sin^2 \theta_2&(t_3-t_1)\,\cos \theta_2\,\sin \theta_2\\
(t_3-t_1)\,\cos \theta_2\,\sin \theta_2&t_1\,\sin^2 \theta_2+t_3\,\cos^2 \theta_2
\end{array}
\right)\\\\=
\left(
\begin{array}{ccc}
\lambda_2\,s_1+(1-\lambda_2) \cos^2  \varphi&(1-\lambda_2) \cos  \varphi\,\sin \varphi\\
(1-\lambda_2) \cos  \varphi\,\sin \varphi&\lambda_2\,s_3+(1-\lambda_2) \sin^2  \varphi
\end{array}
\right).
\end{array}
\end{equation*}
As in the previous calculation, we deduce that they share the same eigenvalues if and only if 
\begin{equation*}
\lambda_2^2 s_1\,s_3+\lambda_2\,(1-\lambda_2)(s_3\,\cos^2 \varphi+ s_1\,\sin^2 \varphi)= t_1\,t_3\,.
\end{equation*}
Recalling \eqref{zetaphi}, the previous equation requires
\begin{equation}\label{z2}
z(\varphi)=z_2(T,S):=\frac{ t_1\,t_3-\lambda_2^2\,s_1\,s_3}{\lambda_2\,(1-\lambda_2)}=\frac{ s_2^2\, t_1\, t_3-s_1 s_3\, t_2^2 }{(s_2 - t_2)\, t_2}.\end{equation}
The pair $(T,S)$ satisfies 
\begin{equation*}
z_1(T,S)=z_2(T,S)
\end{equation*}
if and only if
\begin{equation}\label{condition1}
\frac{ s_2^2\, t_2\, t_3-s_1 s_3\, t_1^2 }{(s_2 - t_1)\, t_1}=
\frac{s_2^2\,t_1\,t_3- s_1\,s_3\,t_2^2}{t_2(s_2-t_2)}.  \end{equation}
By \eqref{zetaphi}, when \eqref{condition1} holds,  the common value $z(\varphi)$ takes exactly any value belonging to $ [s_1,s_3]$.
Moreover, if $t_1\neq t_2$, then \eqref{condition1} is equivalent to the second condition in \eqref{condnec1}. we conclude that if the pair $(T, S)$ satisfies \eqref{condnec1}, then \eqref{main} holds.

\appendix
\section{Convex integration tools}\label{CT}
\noindent
We denote by $\measures$ the set of signed Radon measures on $\mathbb{M}^{3\times 3}$ having finite
mass. 
Given $\nu \in \measures$ we define its 
\textit{barycenter} as
\[
\bar{\nu} := \int_{\matrici} A \, d\nu(A) \,.
\]  
If $\Omega\subset\R^3$ is a bounded open domain, 
we say that a map $f \in C(\bar{\Omega}; \R^3)$ is \textit{piecewise affine} if there exists a countable family of pairwise disjoint open subsets $\Omega_i \subset \Omega$ with $\va{\partial \Omega_i}=0$ and 
\[
\va{ \Omega \smallsetminus \bigcup_{i=1}^{\infty} \Omega_i  } =0  \,,
\]
such that $f$ is affine on each $\Omega_i$. Two matrices $A , B \in \matrici$
such that $\rank (B-A)=1$ are said to be \textit{rank-one connected} and the measure
$\lambda \delta_A + (1- \lambda) \delta_B \in \measures$ with $\lambda \in [0,1]$ is 
called a \textit{laminate of first order} (see also \cite{Mnotes}, \cite{MS}, \cite{pedregal}).

\begin{definition}\label{laminates}
The family of \textit{laminates of finite order} $\laminates$ is the smallest family of 
probability measures in $\measures$	 satisfying the following conditions: 
\begin{enumerate}
\item[(i)] 	$\delta_A \in \laminates$ for every $A \in \matrici \,$;
\item[(ii)] 	 assume that $\sum_{i=1}^N \lambda_i \delta_{A_i} \in \laminates$ and $A_1=\lambda B + (1-\lambda)C$ with $\lambda \in [0,1]$ and $\rank(B-C)=1$. Then the probability measure
\[
\lambda_1 (\lambda \delta_B + (1-\lambda ) \delta_C) + \sum_{i=2}^N \lambda_i \delta_{A_i} 
\]
is also contained in $\laminates$.
\end{enumerate}
\end{definition}
\noindent
The process of obtaining new measures via (ii) is called \textit{splitting}.
The following proposition provides a fundamental tool to solve differential inclusions using convex integration (see e.g. \cite[Proposition 2.3]{afs} for a proof).
\begin{proposition} \label{gradienti}
Let $\nu = \sum_{i=1}^N \alpha_i \delta_{A_i} \in \laminates$ be a laminate of finite order with barycenter $\bar\nu=A$, that is $A= \sum_{i=1}^N \alpha_i A_i$ with $\sum_{i=1}^N \alpha_i=1$. Let $\Omega \subset \R^3$ be a bounded open set, $\alpha\in (0,1)$ and $0<\delta < \min \va{A_i-A_j}/2$. Then there exists a piecewise affine Lipschitz map 
$u \colon \Omega \mapsto \R^3$ such that
\begin{enumerate}
\item[(i)] $u(x)=Ax$ on $\partial \Omega$, \smallskip
\item[(ii)]  $\holder{u-A} < \delta$ , \smallskip
\item[(iii)]  $\va{ \{ x \in \Omega \, \colon \, \va{\nabla u (x) - A_i} < \delta \}  } = \alpha_i \va{\Omega}$, \smallskip
\item[(iv)]  $\dist$ $(\nabla u(x), \spt \nu) < \delta\,$ a.e. in $\Omega$. 	\smallskip
\end{enumerate} 
Moreover, if $A_i \in \R^{3\times 3}_{\rm sym}$, then the map $u$ can be chosen so that $u = \nabla f$, for some $f\in W^{2,\infty}(\Omega)$.
\end{proposition}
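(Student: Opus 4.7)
The plan is to proceed by induction on the order of the laminate $\nu$. The base case $\nu=\delta_A$ is handled by $f(x)=Ax$. For the inductive step, a laminate of order $k$ is, by Definition \ref{laminates}, obtained from one of order $<k$, say $\tilde\nu$, by splitting a Dirac mass $\delta_{A_j}$ into $\mu\delta_B+(1-\mu)\delta_C$ with $\rank(B-C)=1$ and $A_j=\mu B+(1-\mu)C$. Invoking the inductive hypothesis with a tolerance $\tilde\delta\ll\delta$, one obtains a piecewise affine Lipschitz $\tilde f$ adapted to $\tilde\nu$. On the set $\{|\nabla\tilde f-A_j|<\tilde\delta\}$, which by piecewise affinity of $\tilde f$ is (up to a null set) a countable union of open regions on which $\tilde f$ is affine, one replaces $\tilde f$ by a simple-laminate building block that turns $A_j$ into the desired mixture of $B$ and $C$; outside this set one leaves $\tilde f$ unchanged.

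The simple-laminate building block is classical. Given a bounded open $\Omega'$ and $A_j=\mu B+(1-\mu)C$ with $B-C=a\otimes n$, one constructs a piecewise affine Lipschitz $g\colon\Omega'\to\R^3$ equal to $A_j x$ on $\partial\Omega'$ by alternating thin stripes of gradient $B$ and $C$ perpendicular to $n$, in proportions $\mu$ and $1-\mu$, then smoothly cutting off to $A_j x$ in a thin boundary layer. The resulting $\nabla g$ takes the value $B$ or $C$ on a set of relative measure $1-\varepsilon$, and on the remaining boundary layer takes values in an arbitrarily small neighborhood of the segment $[B,C]$. To globalize, one applies $g$ on each affine piece of $\{\nabla\tilde f=A_j\}$ via a Vitali-type covering by rescaled copies of $\Omega'$; the uncovered remainder can be taken of arbitrarily small measure.

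The parameters are tuned so that all four properties survive simultaneously. Matching of the affine boundary datum $A_j x$ with $\tilde f$ on each piece yields a continuous map still equal to $Ax$ on $\partial\Omega$, hence (i). Taking $\tilde\delta$ sufficiently small (relative to $|B-C|$) forces the transition gradients in the boundary layers to lie within $\delta$ of $\spt\nu$, giving (iv). The measure condition (iii) is obtained by tuning stripe widths so that, when the splitting is performed on the full region of mass $\mu\,|\{\nabla\tilde f=A_j\}|$, the resulting proportions match $\alpha_j|\Omega|$ exactly, with boundary-layer errors absorbed into the parameter $\varepsilon$. For the symmetric case, a rank-one connection in $\R^{3\times 3}_{\rm sym}$ is necessarily of the form $B-C=c\,n\otimes n$, so the building block can be realized as $\nabla u$ with $u(x)=\tfrac{1}{2}(A_j x)\cdot x+\psi(x\cdot n)$ for a piecewise affine one-dimensional $\psi$ whose derivative alternates between two constants; the boundary adjustment is performed at the level of $u$ via a smooth cutoff of the Hessian in the thin layer, producing $u\in W^{2,\infty}$.

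The main obstacle is the Hölder bound (ii). Each building block has Lipschitz constant of order $|B-C|$, which is \emph{not} reduced by rescaling; only the $C^0$ deviation from $A_j\cdot$ shrinks when the block is applied at a small scale. The $C^\alpha$ seminorm interpolates between these two quantities and can therefore be driven below $\delta$ by choosing the spatial scale of the building block sufficiently fine, uniformly across the countably many affine pieces of $\tilde f$. Balancing this scale against the measure constraint (iii) — which prevents the boundary layers from collectively occupying too much volume — is the delicate combinatorial step of the argument, and is essentially what one must import from the convex integration literature such as \cite{afs}.
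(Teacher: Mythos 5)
The paper does not actually prove Proposition \ref{gradienti}: it cites \cite[Proposition~2.3]{afs} for the proof, so there is no in-paper argument to compare against. Your sketch follows the standard convex-integration scheme that underlies that reference and M\"uller--\v{S}ver\'ak \cite{MS}: induction on the order of the laminate, a simple-laminate building block realizing a single rank-one splitting, a Vitali covering to transplant the block onto the countably many affine pieces of the inductively constructed map, a shifted splitting $A_j' = \mu B' + (1-\mu)C'$ with $B'-C'=B-C$ to absorb the inductive tolerance, the scalar-potential version $u(x)=\tfrac12(A_j x)\cdot x + \psi(x\cdot n)$ in the symmetric case (using that a symmetric rank-one matrix is proportional to $n\otimes n$), and interpolation between a uniform Lipschitz bound and vanishing $C^0$ error to control $[\,\cdot\,]_{C^\alpha}$. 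All of that is correct and is essentially the argument one finds in the cited literature.

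There is, however, a genuine gap in your treatment of (iv), and as a consequence also of the exact equality in (iii). Your building block alternates stripes of gradient $B$ and $C$ and then ``smoothly cuts off to $A_j x$ in a thin boundary layer,'' and you correctly note that the transition gradients then lie in a small neighborhood of the segment $[B,C]$. For $\delta < \min|A_i - A_j|/2$ the interior of that segment --- e.g.\ the barycenter $A_j$ itself --- lies at distance $\geq \delta$ from both $B$ and $C$, and generically from the whole of $\spt\nu$, so on the boundary layer one has $\dist(\nabla f,\spt\nu)\geq\delta$, violating (iv). Your proposed remedy, ``taking $\tilde\delta$ sufficiently small forces the transition gradients to lie within $\delta$ of $\spt\nu$,'' does not work: shrinking $\tilde\delta$ only controls how faithfully the block matches the affine pieces of $\tilde f$; it does not move the transition gradients off $[B,C]$. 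The correct resolution is a finer building block whose gradient stays within $\delta$ of $\{B,C\}$ a.e.\ \emph{including} the boundary layer. In two dimensions this is Conti's piecewise-affine map on a rhombus; in general one iterates the simple lamination on the transition stripes (each transition gradient lies on $[B,C]$ and therefore admits the same rank-one splitting), so the bad set shrinks geometrically to measure zero while the map remains Lipschitz and piecewise affine with countably many pieces. Only with such a block do (iv) and the exact measure identity (iii) both come out; with the naive linear cutoff, the boundary-layer region belongs to neither of the two $\delta$-balls and (iii) degrades to an approximate identity. This is precisely the combinatorial step you flagged as needing import from \cite{afs}, but it is not a tuning of scales --- it requires a structurally different building block.
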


\section{Proof of \eqref{ref1} and \eqref{inequalities}}\label{AppB}
We start proving the first formula of \eqref{ref1}. The proof of the second is similar and omitted. 
To prove that $\alpha_+(S,U_\alpha(S))=\frac{u_\alpha}{s_2}$, 
we need to verify that
$\min\left(\frac{1-2u_\alpha}{s_3},\frac{u_\alpha}{s_2}\right)=\frac{u_\alpha}{s_2}$. The latter holds if and only if 
$$\frac{u_\alpha}{s_2}\leq\frac{1-2u_\alpha}{s_3}\Longleftrightarrow
u_\alpha\leq\frac{s_2}{2 s_2+s_3}\Longleftrightarrow H\left(\frac{s_2}{2 s_2+s_3}\right)<0\Longleftrightarrow H\left(v_\alpha\right)<0,$$ 
with $v_\alpha$ as in Definition \ref {defLMN} and  $H$  defined by \eqref{H}. This shows that the first formula of \eqref{ref1} holds if and only if the first formula of \eqref{inequalities} holds.
We claim that
\begin{equation}\label{1}
H\left(v_\alpha\right)=\frac{s_2 s_3(s_2-s_1)(s_2-s_3)}{(2 s_2+s_3)^2}.
\end{equation}
Assuming \eqref{1}, and using the ordering
 $s_1<s_2<s_3$, we  conclude $H\left(v_\alpha\right)<0$.
Let us prove \eqref{1}. 
\begin{align*}
H\left(v_\alpha\right)&=6s_2 \left(\frac{s_2}{2 s_2+s_3}\right)^2+(s_1 s_3-3s_2-4 s_2^2)\left(\frac{s_2}{2 s_2+s_3}\right)+2s_2^2\\\\&=
\frac{s_2}{(2 s_2+s_3)^2}[6 s_2^2 +(2 s_2+s_3)(s_1 s_3-3s_2-4 s_2^2)+2s_2(2 s_2+s_3)^2]\\\\&=
\frac{s_2}{(2 s_2+s_3)^2}[6 s_2^2 +(2 s_2+s_3)(s_1 s_3-3s_2-4 s_2^2+2s_2(2 s_2+s_3))]\\\\&=
\frac{s_2}{(2 s_2+s_3)^2}[6 s_2^2 +(2 s_2+s_3)(s_1 s_3-3s_2+2s_2s_3)]\\\\&=
\frac{s_2}{(2 s_2+s_3)^2}[6 s_2^2-6 s_2^2-3s_2 s_3+(2 s_2+s_3)(s_1 s_3+2s_2s_3)\\\\
\end{align*}
\begin{align*}
&=\frac{s_2 s_3}{(2 s_2+s_3)^2}[-3s_2 +(2 s_2+s_3)(s_1+2s_2)]\\\\ 
&=\frac{s_2 s_3}{(2 s_2+s_3)^2}[s_2(-3+2s_1+4 s_2) +s_3(s_1+2s_2)]\\\\
&=\frac{s_2 s_3}{(2 s_2+s_3)^2}[s_2(-3(s_1+s_2+s_3)+2s_1+4 s_2) +s_3(s_1+2s_2)]
\\\\
&=\frac{s_2 s_3}{(2 s_2+s_3)^2}[-s_1s_2+s_2^2-3 s_2 s_3+s_1 s_3+2 s_2 s_3]\\\\
&=\frac{s_2 s_3}{(2 s_2+s_3)^2}[s_2(s_2-s_1)+s_3(s_1-s_2)]=
\frac{s_2 s_3}{(2 s_2+s_3)^2}(s_2-s_1)(s_2-s_3),
\end{align*}
where we used the identity $s_1+s_2+s_3=1$ in the eight equality.

In order to prove the second formula in \eqref{inequalities}, note that  $H$  satisfies $H(s_1,s_2,s_3)=H(s_3,s_2,s_1)$.
Hence, using  \eqref{1},  we have
\begin{align*}H(v_\beta)= H\left(\frac{s_2}{2 s_2+s_1}\right)=H\left(\frac{s_2}{2 s_2+s_3}\right)\Big|_{s_3\to s_1}=
\frac{s_2 s_1(s_2-s_3)(s_2-s_1)}{(2 s_2+s_1)^2}\\\\
=-\frac{s_1s_2 (s_3-s_2)(s_2-s_1)}{(2 s_2+s_1)^2}<0.
\end{align*}

\section{Proof of the equivalence of \eqref{new2} and $f_2 =0$} \label{AppC}
We need to prove that 
$$
L_\alpha\Big(s_3  \big((1-t_1)s_1+ t_1 v_\alpha\big) -
L_\alpha s_1 s_3 - (1-L_\alpha)B\Big)=0
\Longleftrightarrow f_2=0,
$$
where $f_2$ is defined by \eqref{new4}.
Note that, by \eqref{Lf}, $L_\alpha=0$ if and only if $t_1=\frac{2s_2+s_3}{s_2-s_1}>1$, a contradiction.
Therefore we will prove that 
$$
s_3  \big((1-t_1)s_1+ t_1 v_\alpha\big) -
L_\alpha s_1 s_3 - (1-L_\alpha)B=0
\Longleftrightarrow f_2=0.
$$
Let us compute
\begin{align*}
& s_3  \big((1-t_1)s_1+ t_1 v_\alpha\big) -
L_\alpha s_1 s_3 - (1-L_\alpha)B \\\\
& =s_1 s_3+s_3t_1(v_\alpha-s_1)-
[L_\alpha s_1 s_3+(1-L_\alpha)B]\\\\
& =
s_1 s_3+t_1 s_3(v_\alpha-s_1)-\left[B+(s_1s_3-B)\left(1-t_1\frac{s_2-s_1}{2s_2+s_3}\right)\right]\\\\
& =
s_1 s_3+t_1s_3(v_\alpha-s_1)-s_1 s_3 +(s_1 s_3-B)t_1\left(\frac{s_2-s_1}{2s_2+s_3}\right)
\end{align*}
\begin{align*}
& =t_1\left(s_3(v_\alpha-s_1)+(s_1s_3-B)\left(\frac{s_2-s_1}{2s_2+s_3}\right) \right)\\\\
&=t_1 \left[s_3\frac{s_2 (s_1+s_2+s_3)-s_1(2 s_2+s_3)}{2 s_2+s_3}+
\frac{(s_1s_3-B)(s_2-s_1)}{2s_2+s_3}\right]\\\\
&=
 t_1 \frac{s_2-s_1}{2 s_2+s_3}\big( s_3(s_3+s_2)+
s_1 s_3-B\big)= t_1 \frac{s_2-s_1}{2 s_2+s_3}\left( s_3(s_3+s_2+s_1)-
B\right)\\\\
&=
t_1 \frac{s_2-s_1}{2 s_2+s_3}\big( s_3-
B\big).
\end{align*}
Recall that $t_1\in(0,1)$ and, by definition, $s_2\neq s_1$. Hence 
we are left to show that
 \begin{align*}
 s_3-B=0 \Longleftrightarrow f_2=0.
\end{align*}
Recalling \eqref{BBB}, by \eqref{alfa-beta}, we can write
\begin{equation}\label{AppC_B}
B=\frac{\alpha(s_1s_3+2s_2^2)-s_2}{1-\alpha}=\frac{\alpha(s_1s_3+2s_2^2)-s_2(s_1+s_2+s_3)}{1-\alpha}.\end{equation}
We now solve the equation $s_3-B=0$ with respect to the variable $\alpha$ using \eqref{AppC_B} and the identity $s_1+s_2+s_3 = 1$. 
Then we have that  $s_3-B=0$  if and only if 
\begin{align}\label{B}
\alpha=\frac{ s_2 + s_3}{
s_1 s_3+2s_2^2+s_3(s_1+s_2+s_3) }
=\frac{s_2 + s_3}{
 2 s_2^2 + 2 s_1 s_3 + s_2 s_3 + s_3^2}.
\end{align}
By \eqref{new4}, \eqref{B} is the same as $f_2=0$.
\section*{Acknowledgments} This material is based upon work supported by the National Science Foundation under Grant No.~2108588 (N.~Albin, Co-PI). V. Nesi gratefully acknowledges  Progetti di Ricerca di Ateneo 2022, ``Equazioni ellittiche e paraboliche non lineari'', rif. RM1221816BBB81FA.
M. Palombaro is a member of the Gruppo Nazionale per l'Analisi Matematica, la Probabilit\`a e le loro applicazioni (GNAMPA) of the Istituto Nazionale di Alta Matematica (INdAM). 

\section*{Declarations}
The authors have no conflicts of interest. Data sharing is not applicable to this article as no datasets were generated or analysed during the current study.

\end{document}